\newcommand{\ol}{\overline}
\newcommand{\ul}{\underline}
\newtheorem{theorem}{Theorem}[section]
\newtheorem{lemma}[theorem]{Lemma}
\newtheorem{proposition}[theorem]{Proposition}
 \theoremstyle{definition}
\newtheorem{condition}[theorem]{Condition}
\theoremstyle{remark}
\newtheorem{remark}[theorem]{Remark}
\numberwithin{equation}{section}
\newcommand{\abs}[1]{\lvert#1\rvert}
\begin{document}
\setlength{\baselineskip}{1.2\baselineskip}

\title[ mixed Hessian equations]
{The Dirichlet Problem for  mixed Hessian type equations on Riemannian manifolds}

\author{Xiaojuan Chen}
\address{Faculty of Mathematics and Statistics, Hubei Key Laboratory of Applied Mathematics, Hubei University,  Wuhan 430062, P.R. China}
\email{201911110410741@stu.hubu.edu.cn}

\author{Juhua Shi}
\address{Faculty of Mathematics and Statistics, Hubei Key Laboratory of Applied Mathematics, Hubei University,  Wuhan 430062, P.R. China}
\email{20210075@hubu.edu.cn}

\author{Xiaocui Wu}
\address{Faculty of Mathematics and Statistics, Hubei Key Laboratory of Applied Mathematics, Hubei University,  Wuhan 430062, P.R. China}
\email{202021104010498@stu.hubu.edu.cn}

\author{Kang Xiao}
\address{Faculty of Mathematics and Statistics, Hubei Key Laboratory of Applied Mathematics, Hubei University,  Wuhan 430062, P.R. China}
\email{202021104010499@stu.hubu.edu.cn}

\thanks{Research of the authors was supported by the National Natural Science Foundation of China No.11971157, 12101206.}

\begin{abstract}
In this paper, we derive $C^2$ estimates
for a class of mixed Hessian type equations with Dirichlet boundary condition, and obtain the existence theorem of admissible solutions for the classical Dirichlet problem of these mixed Hessian type equations.

{\em Mathematical Subject Classification (2010):}
 Primary 35J60, Secondary
35B45.

{\em Keywords:} Dirichlet problem, a priori estimates, mixed Hessian type equations.

\end{abstract}

\maketitle
\bigskip

\section{Introduction}

\medskip
In this paper, we consider the Dirichlet problem for a class of mixed Hessian type equations with the following form
\begin{equation} \label{1.1}
\begin{cases}
\sigma_k(\nabla^2u+\chi(x,u,\nabla u)) = \sum _{l=0}^{k-1} \alpha_l(x) \sigma_l (\nabla^2u+\chi(x,u,\nabla u)), &\mbox{in}~ M,\\
u= \varphi(x), &\mbox{on}~\partial M,
\end{cases}
\end{equation}
on a Riemannian manifold $(M^n, g)$ of dimension $n\geq 3$ with smooth boundary $\partial M$, where $ 3\leq k\leq n$, $\chi(x,u,\nabla u)$ is a $(0,2)$-tensor on $\overline{M}$, $\nabla u$ and $\nabla^2 u$ are the gradient and Hessian of the function $u$, respectively.
 Note that $\sigma_k$ is a $k$-Hessian operator defined by
$$\sigma_k(W):=\sigma_k(\lambda(W)),$$
where $\lambda(W)$ are the eigenvalues  of  a $(0,2)$-tensor  $W$ with respect to the metric $g$.
Recall that the G{\aa}rding's cone is defined as
\begin{equation}\label{gam}
\Gamma_k  = \{ \lambda  \in \mathbb{R}^n :\sigma _i (\lambda ) > 0,\forall~ 1 \le i \le k\}.
\end{equation}
A function $u \in C^2(\overline{M})$ is called admissible if $\lambda(\nabla^2 u+\chi(x,u,\nabla u)) \in \Gamma_{k-1}$ for any $x \in M$.
Note that for fixed $x\in \overline{M}$, $z\in \mathbb{R}$ and $p\in T_x^{\ast} M$,
$$\chi(x,z,p):  T_x^{\ast} M \times  T_x^{\ast} M \longrightarrow \mathbb{R}$$
is a symmetric bilinear map.
We shall use the notation
\[
\chi^{\xi\eta}(x,\cdot, \cdot):=\chi(x,\cdot,\cdot)(\xi, \eta),\quad \forall \xi,\eta \in T_x^{\ast} M.
\]

The equation in \eqref{1.1} with $\chi=0$
\begin{equation}\label{1.1-1}
\sigma_k(\nabla^2u) = \sum _{l=0}^{k-1} \alpha_l(x) \sigma_l (\nabla^2u), \quad \mbox{in}~M
\end{equation}
is known to have attracted much research interest and have many applications.
Specially, it is Monge-Amp\`{e}re equation when $k=n$ and $\alpha_1=\cdots=\alpha_{k-1}=0$, $k$-Hessian equation when  $\alpha_1= \cdots = \alpha_{k-1}= 0$, and $(k,l)$-Hessian quotient equation when $\alpha_0= \cdots= \alpha_{l-1}= \alpha_{l+1} = \cdots= \alpha_{k-1}=0$. The corresponding Dirichlet problem  was studied extensively, see \cite{CNS84, I87, CNS85, T95, GB2014, GBH2015, GBH2016} and so on. In fact, the mixed Hessian equation \eqref{1.1-1} is motivated from the study of many important geometric problems. For example, special Lagrangian equations introduced by Harvey and Lawson \cite{HL82} can be written as the following form,
\[
\sin \theta \sum_{k=0}^{[n/2]}(-1)^k\sigma_{2k}(\nabla^2u)+\cos \theta \sum_{k=0}^{[(n-1)/2]}(-1)^k\sigma_{2k+1}(\nabla^2u)=0.
\]
 Another important example for the equation \eqref{1.1-1} was the following equation
 $$\sigma_1(\nabla^2u)+b\sigma_n(\nabla^2u)=C$$
 for some constants $b\geq 0, C>0$, arising from the study of $J$-equation on toric manifolds by Collins-Sz\'ekelyhidi  \cite{Co17}, which was raised as a conjecture by Chen \cite{Chen00} in the study of Mabuchi energy.

As an important example for the applications of the general notion of fully
nonlinear elliptic equations developed in \cite{Kr}, Krylov studied Dirichlet problem of the equation \eqref{1.1-1} in a $(k-1)$-convex domain in $\mathbb{R}^n$ with $\alpha_l>0 (0\leq l\leq k-1)$. Recently, Guan-Zhang \cite{GZ2019v2} observed that the equation \eqref{1.1-1} is equivalent to the following equation
\begin{equation}\label{fjt}
  \frac{\sigma_k}{\sigma_{k-1}}(\nabla^2u) -\sum_{l=0}^{k-2}\alpha_l(x) \frac{\sigma_l}{\sigma_{k-1}}(\nabla^2 u) =\alpha_{k-1},
\end{equation}
and the equation is elliptic and concave in $\Gamma_{k-1}$. Then they obtained the existence of $(k-1)$-admissible solution for the Dirichlet problem of the  equation \eqref{fjt} without sign requirement for $\alpha_{k-1}$. Later the corresponding in Neumann problem, prescribed  curvature problem, complex manifolds  were also discussed in \cite{CCX19, CCX21, Zhou22,Chen20,Chen19, Zhang21, Zhou21, TX-22}.

The main motivations to our study of the equation \eqref{1.1} with the dependence of $\chi$ come from many interesting geometric problems. These include the Christoffel-Minkowski problem (see \cite{GM2003}) and the Alexandrov problem of prescribed curvature measure (see \cite{GL1997}),  which are associated with the equation \eqref{1.1} on $\mathbb{S}^n$ for $\alpha_1=\cdots=\alpha_{k-1}=0$ and $\chi=uI$. Moreover, Guan-Zhang \cite{GZ2019v2}  studied
$$\sigma_k(\nabla^2u+uI) = \sum _{l=0}^{k-1} \alpha_l(x) \sigma_l (\nabla^2u+uI), \quad \mbox{on}~\mathbb{S}^n,$$
which arises in the problem of prescribed convex combination of area measures \cite{S}.
Another analogue example for the equation \eqref{1.1} with the dependence of $\chi$ include the Darboux equation, which appears in isometric embedding (see \cite{GL1994,N1953}); the Schouten tensor equation, which is connected with a natural fully nonlinear version of the Yamabe problem (see \cite{Via2000}).
A natural problem is raised whether we can consider the Dirichlet problem for the equation \eqref{1.1} with the dependence of $\chi$.

In the study of the equation \eqref{1.1}, a priori $C^2$ estimates are crucial to the existence and regularity of solutions.
Compared with the equation \eqref{fjt} in \cite{GZ2019v2}, the equation \eqref{1.1} involves a $(0,2)$-tensor $\chi$, which is more complicated. Therefore, it is
more difficult to obtain a priori estimates, and suitable constraints on $\chi$ should be needed. Recently, Guan-Jiao \cite{GBH2015,GBH2016} considered a fully nonlinear elliptic equation with the general form
$$f(\lambda(\nabla^2u+A[u]))=\psi(x, u, \nabla u)$$
on a Riemannian manifold and derived the estimates under conditions for a $(0,2)$ tensor $A[u]=A(x,u,\nabla u)$ and $\psi$ which are close to optimal. Inspired by the Guan-Jiao's work, we introduce the following conditions:
\begin{condition}\label{cond-cxw-01}
For any $x\in \overline{M}, z\in \mathbb{R}, p\in T_x \overline{M}$, $\xi\in T_xM$, $\chi$  satisfies
\begin{align}\label{1.3}
 \chi^{\xi\xi}(x,z,p)\quad\mbox{is concave in}~ p,
 \end{align}
 \begin{align}\label{1.4}
 \chi^{\xi\xi}_z \ge 0.
 \end{align}
\end{condition}

Then the second order estimates for the equation \eqref{1.1} are as follows.

\begin{theorem}\label{th1.2}
Let  $3 \leq k \leq n$, $\varphi, \alpha_l$ be smooth functions with $\alpha_l>0$ for $0\leq l\leq k-2$, $u$ be a smooth admissible solution (i.e. $\lambda(\nabla^{2}u+\chi(x,u,\nabla u)\in \Gamma_{k-1}$)  for the equation \eqref{1.1}.  Assume that the $(0,2)$-tensor $\chi$ satisfies Condition
\ref{cond-cxw-01} and there exists an admissible subsolution  $\underline{u}\in C^2(\overline{M})$  satisfying
\begin{equation}\label{cond-cxw-03}
\begin{cases}
\sigma_{k}(\nabla^{2}\underline{u}+\chi(x,\underline{u},\nabla \underline{u}))\ge \sum_{l=0}^{k-1}\alpha_{l}(x)\sigma_{l}(\nabla^{2}\underline{u}+\chi(x,\ul u,\nabla \underline{u})),&\text{in }M, \\
\underline{u}=\varphi(x),&\text{on }\partial M.
\end{cases}
\end{equation}
Then there exists $C>0$ depending on $n,k,l$, $\|u\|_{C^1}$, $\|\ul{u}\|_{C^2}$, $\|\chi^{ij}\|_{C^2}$, $\|\alpha_{k-1}\|_{C^2}$, $\|\alpha_l\|_{C^2}$ and $\inf\alpha_l$ with $0\leq l \leq k-2$ such that
$$\max_{\overline{M}}|\nabla^2 u|\leq C.$$
\end{theorem}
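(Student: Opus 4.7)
The plan is to follow the standard three-step strategy for $C^2$ estimates of fully nonlinear elliptic equations on manifolds: first reformulate the equation so that it is concave and elliptic on $\Gamma_{k-1}$, then reduce the global estimate to boundary estimates via a maximum principle argument, and finally establish the boundary estimates in three separate cases (double-tangential, mixed tangential-normal, double-normal). Throughout, write $W = \nabla^2 u + \chi(x,u,\nabla u)$, $\lambda = \lambda(W)$, and use the observation of Guan-Zhang to rewrite \eqref{1.1} as
\begin{equation*}
F(W) := \frac{\sigma_k}{\sigma_{k-1}}(W) - \sum_{l=0}^{k-2}\alpha_l(x)\frac{\sigma_l}{\sigma_{k-1}}(W) = \alpha_{k-1}(x),
\end{equation*}
which is elliptic and concave on the cone $\Gamma_{k-1}$ because each summand is; the positivity hypothesis $\alpha_l>0$ for $0\le l\le k-2$ is exactly what is needed for concavity of $-\alpha_l\sigma_l/\sigma_{k-1}$ to cooperate with that of $\sigma_k/\sigma_{k-1}$. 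Denote the linearized operator by $F^{ij} := \partial F/\partial W_{ij}$, which is positive definite and whose trace $\sum F^{ii}$ admits a positive lower bound on $\Gamma_{k-1}$; these are the two structural facts to be exploited repeatedly.

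For the global estimate, I would test the function
\begin{equation*}
\Phi(x,\xi) = \log W_{\xi\xi} + \phi(|\nabla u|^2) + a(\underline{u}-u)
\end{equation*}
at an interior maximum, where $\xi$ is a unit vector realizing the largest eigenvalue of $W$, $\phi$ is a standard convex auxiliary function and $a>0$ is a large constant to be chosen. Differentiating twice and applying the operator $F^{ij}$, one obtains, after using the concavity of $F$ (which controls the troublesome third-order terms of the form $F^{ij,rs}W_{ij\xi}W_{rs\xi}$ by a negative quantity) and commuting covariant derivatives (which produces bounded curvature terms times $\sum F^{ii}\lambda_1$), an inequality of Guan-Jiao type. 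The chain-rule derivatives of $\chi(x,u,\nabla u)$ twice in direction $\xi$ produce exactly the terms $\chi^{\xi\xi}_{p_kp_l} u_{k\xi}u_{l\xi}$ and $\chi^{\xi\xi}_z u_{\xi\xi}$; the concavity assumption \eqref{1.3} makes the first contribution nonpositive (after contraction with $F^{ij}$ it can be discarded), and the monotonicity \eqref{1.4} yields a good sign for the second, so both Condition~\ref{cond-cxw-01} hypotheses enter in a natural way. The subsolution term $a(\underline{u}-u)$ together with the Guan-type inequality $F^{ij}(\underline{W}_{ij}-W_{ij})\ge F(\underline{W})-F(W)$, which is a consequence of concavity, provides a positive lower bound of the form $c_0\sum F^{ii}$, and choosing $a$ large absorbs all remaining bad terms; this yields $\lambda_1\le C(1+\max_{\partial M}\lambda_1)$.

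The boundary estimate is the main obstacle. The double-tangential case is immediate since $u-\underline{u}\equiv 0$ on $\partial M$ implies that tangential second derivatives of $u-\underline{u}$ along $\partial M$ are controlled by $|\nabla(u-\underline{u})|$ times the second fundamental form. For a mixed tangential-normal derivative at $x_0\in\partial M$, use a standard barrier
\begin{equation*}
\Psi = A(\underline{u}-u) + B|x-x_0|^2 - \sum_{\tau\text{ tangent}}\bigl(T_\tau(u-\underline{u})\bigr)^2,
\end{equation*}
where $T_\tau$ are tangential vector fields, and show via the linearized operator and the subsolution inequality that $\pm T_\tau\partial_\nu(u-\underline{u})$ is dominated by $\Psi$ on a half-ball and on $\partial M$ near $x_0$; the concavity of $\chi$ in $p$ plays no role here, only positivity of $F^{ij}$ and the strict subsolution estimate. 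The double-normal estimate is the most delicate: one bounds $W_{\nu\nu}(x_0)$ using the fact that, with the tangential block of $W$ already controlled, if $W_{\nu\nu}$ were very large then $\lambda(W)$ would force $\sigma_{k-1}(W)$ to grow linearly in $W_{\nu\nu}$ while $\sigma_k(W)/\sigma_{k-1}(W)$ would approach an eigenvalue of the tangential block; combining with the equation $F(W)=\alpha_{k-1}$ and the fact that $\alpha_l>0$, $0\le l\le k-2$, gives an a priori upper bound on $W_{\nu\nu}$ through a contradiction argument of Trudinger type. This final step is where the hypothesis $\alpha_l>0$ for $l\le k-2$ is decisive, since without it the required monotonicity of $F$ along the normal direction can fail.
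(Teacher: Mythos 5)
Your overall architecture coincides with the paper's (the Guan--Zhang reformulation $G=\sigma_k/\sigma_{k-1}-\sum_{l\le k-2}\alpha_l\sigma_l/\sigma_{k-1}$, an interior maximum principle for $\log U_{11}+\frac a2|\nabla u|^2+b(\underline u-u)$, and boundary estimates in three cases), but two steps as you describe them would not go through. In the interior estimate you assert that the plain concavity inequality $F^{ij}(\underline W_{ij}-W_{ij})\ge F(\underline W)-F(W)$ ``provides a positive lower bound of the form $c_0\sum F^{ii}$''; it gives only nonnegativity. What is actually needed is Guan's quantitative lemma (Lemma 2.1 of \cite{GBH2016}): when $|\nu_\mu-\nu_\lambda|\ge\beta$ one gains the extra term $\varepsilon(1+\sum_iG^{ii})$, and when $|\nu_\mu-\nu_\lambda|<\beta$ one uses instead $G^{ii}\ge\frac{\beta}{\sqrt n}\sum_kG^{kk}$, so that $\sum_iG^{ii}U_{ii}^2\ge\frac{\beta}{\sqrt n}|\lambda|^2\sum_iG^{ii}$ closes the estimate; this dichotomy cannot be skipped. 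You also leave unexplained how $G^{ii}(\nabla_iU_{11})^2/U_{11}^2$ is absorbed: via the critical-point identity alone it becomes $G^{ii}(\nabla_i\phi)^2$, which contains a term $Cb^2\sum_iG^{ii}$, and since $b$ must be large to exploit the subsolution this cannot be beaten by $b\varepsilon\sum_iG^{ii}$. This is exactly why the paper keeps the negative terms $G_k^{ij,rs}\nabla_1U_{ij}\nabla_1U_{rs}$, $G_l^{ij,rs}\nabla_1U_{ij}\nabla_1U_{rs}$ and invokes the Urbas-type partition $J=\{i:3U_{ii}\le-U_{11}\}$ in \eqref{non}; ``discarding the third-order terms by concavity,'' as you propose, throws away precisely the negativity needed there. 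Two further omissions: the differentiated coefficients produce terms $\nabla\alpha_l\,G_l$ with $G_l=-\sigma_l/\sigma_{k-1}$ not a priori bounded, handled in the paper by the case split $\sigma_l/\sigma_{k-1}\le N$ versus $>N$ together with \eqref{i1}; and \eqref{1.4} is used not to give a ``good sign'' to $\chi_z^{\xi\xi}u_{\xi\xi}$ but to pass from $\chi^{ii}(x,u,\nabla\underline u)$ to $\chi^{ii}(x,\underline u,\nabla\underline u)$ (using $u\ge\underline u$) in the subsolution comparison.

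The more serious gap is the double-normal boundary estimate, where your sketch assumes what must be proved. With the tangential block bounded, as $U_{nn}=R\to\infty$ one has $G(\lambda'(U_{\alpha\beta}),R)\to\frac{\sigma_{k-1}(\lambda')}{\sigma_{k-2}(\lambda')}-\sum_{l\le k-2}\alpha_l\frac{\sigma_{l-1}(\lambda')}{\sigma_{k-2}(\lambda')}$ (not ``an eigenvalue of the tangential block''), and nothing in the equation alone forces this limit to exceed $\alpha_{k-1}(x)$. The whole content of the Trudinger--Guan argument is the uniform strict inequality \eqref{3.53}, proved at a minimizing boundary point via the function $\Phi=-\eta\nabla_n(u-\underline u)+H[u]+Q$, the barrier $\Psi$, the maximum principle, a strictly positive lower bound for $\eta-\int_0^1H_{p_n}[u^t]\,dt$ coming from the subsolution, and finally the argument (using $\sigma_k/\sigma_{k-1}\to0$ as $\lambda\to\partial\Gamma_{k-1}$ and $\alpha_l>0$) that $\lambda(U(x_0))$ stays away from $\partial\Gamma_{k-1}$; only then does Lemma 1.2 of \cite{CNS85} bound $U_{nn}$. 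None of this appears in your proposal. Two smaller points in the mixed estimate: the barrier should contain $A_1\bigl(u-\underline u+td-\frac N2d^2\bigr)$ rather than $A(\underline u-u)$ --- with your sign the subsolution makes $\mathcal L\Psi$ more positive and $\Psi$ negative on $\partial M_\delta$ --- and the claim that concavity of $\chi$ in $p$ ``plays no role'' there is incorrect: \eqref{1.3} is what yields $\chi^{ii}_{p_k}\nabla_k(\underline u-u)\ge\chi^{ii}(x,u,\nabla\underline u)-\chi^{ii}(x,u,\nabla u)$, hence $\mathcal L(\underline u-u)\ge G^{ii}(\underline U_{ii}-U_{ii})$, which is the heart of Lemma \ref{lem3.4}.
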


In particular, in order to obtain the  gradient estimates for the equation \eqref{1.1}, we restrict our study in case $\chi=\chi(x, p)$ and add the following conditions:

\begin{condition}\label{cond-cxw-02}
For any $x\in \overline{M}, p\in T_x \overline{M}$, $\xi, \eta \in T_xM$, $\chi$  satisfies
 \begin{equation}\label{3.1}
\begin{cases}
p\cdot \nabla_x\chi^{\xi\xi}(x,p)\le \ol{\psi}_1(x)\abs{\xi}^2(1+\abs{p}^{\gamma_1}), \\
|\chi^{\xi\eta}(x,p)|^2\leq \ol{\psi}_2(x) |\xi| |\eta| (1+\abs{p}^{\gamma_2}),
\end{cases}
\end{equation}
with some functions $\ol{\psi}_1, \ol{\psi}_2>0$ and constants $\gamma_1, \gamma_2\in(0,2)$.
\end{condition}

Then we consider the solvability of the Dirichlet problem for the equation \eqref{1.1} on  Riemannian manifolds.

\begin{theorem}\label{main01}
Let $(M, g)$ be a Riemannian manifold
with nonnegative sectional curvature, $3 \leq k \leq n$, $\varphi, \alpha_l$ be smooth functions with $\alpha_l>0$ for $0\leq l\leq k-2$. Assume that the $(0,2)$-tensor $\chi$ satisfies $\chi=\chi(x, p)$, Condition
\ref{cond-cxw-01}, Condition \ref{cond-cxw-02} and
 there exists an admissible subsolution  $\underline{u}\in C^2(\overline{M})$  satisfying \eqref{cond-cxw-03},
then there exists an admissible solution $u\in C^{\infty} (\bar{M})$ for the equation \eqref{1.1}.
\end{theorem}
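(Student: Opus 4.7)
The strategy is a continuity method combined with the \emph{a priori} $C^2$ estimates provided by Theorem \ref{th1.2}. Following Guan--Zhang, I first rewrite \eqref{1.1} in the equivalent form
\[
F(\nabla^2 u+\chi(x,u,\nabla u)):=\frac{\sigma_k}{\sigma_{k-1}}(\nabla^2 u+\chi)-\sum_{l=0}^{k-2}\alpha_l(x)\frac{\sigma_l}{\sigma_{k-1}}(\nabla^2 u+\chi)=\alpha_{k-1}(x),
\]
which is elliptic and concave on $\Gamma_{k-1}$ and admits $\underline{u}$ as a subsolution. I then set up the continuity family
\[
F(\nabla^2 u_t+\chi(x,u_t,\nabla u_t))=t\alpha_{k-1}(x)+(1-t)F(\nabla^2\underline{u}+\chi(x,\underline{u},\nabla\underline{u})),\quad u_t|_{\partial M}=\varphi,
\]
parametrized by $t\in[0,1]$, so that $u_0=\underline{u}$ and $u_1$ solves \eqref{1.1}. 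A direct check shows $\underline{u}$ remains a subsolution for every $t\in[0,1]$, since the right-hand side is a convex combination of $F(\nabla^2\underline{u}+\chi(x,\underline{u},\nabla\underline{u}))$ and $\alpha_{k-1}(x)$.

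Let $T\subset[0,1]$ be the set of $t$ for which an admissible solution $u_t\in C^{2,\alpha}(\overline{M})$ exists. Openness of $T$ follows from the implicit function theorem: at an admissible $u_t$ the linearized operator $L(v)=F^{ij}\nabla_{ij}v+b^i\nabla_i v+cv$ is uniformly elliptic, and the condition $\chi^{\xi\xi}_z\ge 0$ of Condition \ref{cond-cxw-01} combined with the concavity of $F$ guarantees that $c$ can be arranged to have the correct sign (via the maximum principle), so that $L$ is an isomorphism on $\{v\in C^{2,\alpha}(\overline{M}):v|_{\partial M}=0\}$; admissibility is an open condition, hence preserved for nearby parameters. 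Since $0\in T$, it remains to establish closedness, which reduces to uniform \emph{a priori} bounds in $C^{2,\alpha}(\overline{M})$.

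The $C^0$ bound follows from the comparison principle: $\underline{u}$ gives the lower barrier, while an upper barrier is obtained from the boundary datum $\varphi$ using admissibility and the maximum principle applied to the trace of the equation. The $C^1$ bound is where Condition \ref{cond-cxw-02} and the nonnegativity of the sectional curvature genuinely enter. For the interior gradient estimate I would apply the maximum principle to an auxiliary function of the form $H=\log|\nabla u|^2+\phi(u)$; differentiating the equation once and using the concavity of $F$, the subquadratic growth rates $\gamma_1,\gamma_2<2$ in \eqref{3.1} allow the bad terms coming from $p\cdot\nabla_x\chi^{\xi\xi}$ and $|\chi^{\xi\eta}|^2$ to be absorbed into the positive term $F^{ij}u_iu_j$, while the assumption $\mathrm{Rm}\ge 0$ kills the sign-indefinite curvature contribution produced by commuting covariant derivatives (via the Ricci identity). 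The boundary gradient bound is constructed by the standard barriers built from $\underline{u}$ and the distance function $\mathrm{dist}(\cdot,\partial M)$. With $\|u_t\|_{C^1(\overline{M})}$ under control, Theorem \ref{th1.2} furnishes a uniform $C^2$ bound. The equation is then uniformly elliptic and concave in the Hessian, so the Evans--Krylov theorem together with Krylov's boundary $C^{2,\alpha}$ theory yields a uniform $C^{2,\alpha}(\overline{M})$ bound, and Schauder bootstrapping delivers uniform $C^{k,\alpha}$ bounds of all orders. Hence $T$ is closed, $T=[0,1]$, and $u:=u_1\in C^{\infty}(\overline{M})$ is the desired admissible solution.

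The main obstacle is the global gradient estimate: one must check that the subquadratic exponents $\gamma_1,\gamma_2<2$ and the sign of the curvature really do close the Bernstein-type argument, since differentiating $\chi(x,\nabla u)$ twice generates several borderline terms whose control rests on a delicate interplay between the concavity of $F$ and the admissibility cone. Every other ingredient---openness via the implicit function theorem, $C^0$ comparison, boundary barriers from $\underline{u}$, and the Evans--Krylov/Schauder bootstrap---is by now standard once the $C^1$ bound is secured.
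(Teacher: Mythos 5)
Your overall architecture---a continuity path starting at the subsolution, openness via the implicit function theorem, closedness via a priori bounds, then Evans--Krylov and Schauder---is exactly the paper's route: its proof of Theorem \ref{main01} consists of citing the gradient estimate (Theorem \ref{C1}), the second order estimates (Theorems \ref{theorem 3.1} and \ref{th1.2}), the non-degeneracy observation \eqref{eqde}, and then ``the standard continuity method.'' Two points in your write-up need correcting, though they are fixable. First, in the openness step you claim that $\chi^{\xi\xi}_z\ge 0$ lets you ``arrange the correct sign'' of the zeroth-order coefficient; in fact $c=G^{ij}\chi^{ij}_z\ge 0$ is the \emph{wrong} sign for invertibility of the Dirichlet problem. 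What actually saves openness (and your $C^0$ comparison argument) is the hypothesis of this theorem that $\chi=\chi(x,p)$ and $\alpha_{k-1}=\alpha_{k-1}(x)$, so the linearization has no zeroth-order term at all---this is precisely how the paper uses $u$-independence at the start of the proof of Theorem \ref{C1}. Second, ``the equation is then uniformly elliptic'' after the $C^2$ bound requires the argument the paper gives via \eqref{eqde}: since $\alpha_l>0$ for $l\le k-2$, $G\to-\infty$ as $\lambda\to\partial\Gamma_{k-1}$, so bounded admissible solutions stay in a compact subset of $\Gamma_{k-1}$ and $\sigma_{k-1}$ cannot degenerate; this should be said, not assumed.

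The genuine gap is the one you yourself flag: the global gradient estimate, which in the paper is a separate theorem (Theorem \ref{C1}) carrying most of the weight not already in Theorem \ref{th1.2}. Your sketch with $H=\log|\nabla u|^2+\phi(u)$ is unlikely to close as described: differentiating the equation produces the terms $\frac{\nabla_s u}{|\nabla u|^2}\bigl(\nabla_s\alpha_{k-1}-\sum_{l=0}^{k-2}\nabla_s\alpha_l\,G_l\bigr)$, and $G_l=-\sigma_l/\sigma_{k-1}$ has no a priori bound, so these are not ``borderline terms absorbed by $\gamma_1,\gamma_2<2$''; in addition $\alpha_{k-1}$ has no sign. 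The paper deals with this by (i) building the subsolution into the test function, $\log\omega-\delta\log\psi$ with $\psi=(u-\underline{u})+\sup_M(\underline{u}-u)+1$, so that in the case $|\nu_\mu-\nu_\lambda|\ge\beta$ Guan's lemma yields $\mathcal{L}(\underline{u}-u)\ge\varepsilon(1+\sum_iG^{ii})$, which is the decisive positive term; (ii) a dichotomy $\sigma_l/\sigma_{k-1}\le N$ versus $>N$, combined with the structural lower bound \eqref{i1} (through the term $\alpha_l\sigma_l\sigma_{k-2}/\sigma_{k-1}^2$), to absorb the $\nabla\alpha_l\,G_l$ contributions; and (iii) in the case $|\nu_\mu-\nu_\lambda|<\beta$, the bound $G^{ii}\ge\frac{\beta}{\sqrt n}\sum_kG^{kk}$ together with the term $\frac{1}{2|\nabla u|^2}G^{ii}U_{ii}^2$. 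Nonnegative sectional curvature enters only to discard $R^k_{iis}\nabla_ku\nabla_su\ge 0$, as you say. Without the subsolution (or an equivalent device) in the auxiliary function, your Bernstein argument has no source for a $\sum_iG^{ii}$ term of the right size, so the step you call ``the main obstacle'' is exactly where the proposal is incomplete relative to the paper.
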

\begin{remark}
Following the idea in \cite{GBH2015}, \cite{GBH2016} and \cite{GZ2019v2},
we obtain the second order  estimates for
admissible solutions under Condition \ref{cond-cxw-01}, and establish gradient estimates under Condition \ref{cond-cxw-01}, Condition \ref{cond-cxw-02}. The sub-solution condition is critical in all steps of the a priori estimates. We emphasize that, there is no sign requirement  for $\alpha_{k-1}$ in the above theorem.
\end{remark}

The organization of the paper is as follows. In Section 2 we start with some
preliminaries. Our proof of the estimates heavily depends on results in Section 3 and Section 4. $C^1$ estimates are given in Section 3. In Section 4 we derive the global estimates
for the second order derivatives, and finish the proof of Theorem \ref{main01}.

\section{Preliminaries}

In this section, we give some basic notations and some basic properties of elementary symmetric functions, which could be found in
\cite{L96}, and establish some key lemmas.

\subsection{Basic properties of elementary symmetric functions}

For $\lambda=(\lambda_1, ... , \lambda_n)\in \mathbb{R}^n$,
the $k$-th elementary symmetric function is defined
by
\begin{equation*}
\sigma_k(\lambda) = \sum _{1 \le i_1 < i_2 <\cdots<i_k\leq n}\lambda_{i_1}\lambda_{i_2}\cdots\lambda_{i_k}.
\end{equation*}
We also set $\sigma_0=1$ and denote $\sigma_k(\lambda \left| i \right.)$ the $k$-th symmetric
function with $\lambda_i = 0$. Recall that the G{\aa}rding's cone is defined as \eqref{gam}.


The generalized Newton-MacLaurin inequality is as follows, which will be used all the time.
\begin{proposition}\label{prop2.4}
For $\lambda \in \Gamma_m$ and $m > l \geq 0$, $ r > s \geq 0$, $m \geq r$, $l \geq s$, we have
\begin{align}
\Bigg[\frac{{\sigma _m (\lambda )}/{C_n^m }}{{\sigma _l (\lambda )}/{C_n^l }}\Bigg]^{\frac{1}{m-l}}
\le \Bigg[\frac{{\sigma _r (\lambda )}/{C_n^r }}{{\sigma _s (\lambda )}/{C_n^s }}\Bigg]^{\frac{1}{r-s}}. \notag
\end{align}
\end{proposition}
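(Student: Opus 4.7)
The plan is to reduce the claim to discrete concavity of the map $k \mapsto \log S_k(\lambda)$, where $S_k(\lambda) := \sigma_k(\lambda)/C_n^k$, and then invoke the standard monotonicity of secant slopes for a concave function. With this shorthand, the inequality to prove reads exactly $\rho(l,m) \le \rho(s,r)$, where $\rho(a,b) := (\log S_b - \log S_a)/(b-a)$, and the hypothesis becomes $s\le l$, $r\le m$ (with $l<m$ and $s<r$). By the chord characterization of concavity on an integer lattice, the inequality would follow once I show that $\phi(k) := \log S_k(\lambda)$ is concave in $k$ on $\{0,1,\ldots,m\}$.

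The crux is therefore the classical Newton inequality
\[
S_k(\lambda)^2 \;\ge\; S_{k-1}(\lambda)\, S_{k+1}(\lambda), \qquad 1\le k\le m-1,
\]
valid for $\lambda\in \Gamma_m$ (which also guarantees $\sigma_0,\ldots,\sigma_m>0$ so that the logarithms make sense). I would either cite this directly from \cite{L96} or reproduce it by working with the polynomial $P(t)=\prod_{i=1}^n(1+\lambda_i t) = \sum_j C_n^j\, S_j(\lambda)\, t^j$: using G{\aa}rding's observation that positivity of $\sigma_1,\ldots,\sigma_m$ propagates through the successive derivatives of $P$, one extracts the two-term inequality $k(n-k+1)\,\sigma_{k-1}\sigma_{k+1}\le (k+1)(n-k)\,\sigma_k^2$, which is exactly the Newton inequality above after normalization by the binomial coefficients.

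Once Newton's inequality is available, the rest is a two-step pivot: by concavity of $\phi$, $\rho(l,m)\le \rho(s,m)$ (shifting the left endpoint from $l$ leftward to $s$ can only increase the secant slope) and $\rho(s,m)\le \rho(s,r)$ (shifting the right endpoint from $m$ leftward to $r$ can only increase the secant slope). Chaining these two comparisons and exponentiating gives the claim. The only genuine obstacle in the plan is establishing the Newton inequality on the full G{\aa}rding cone $\Gamma_m$, rather than on the comparatively easy set of $\lambda$ for which the characteristic polynomial has only real roots; this is precisely the content of G{\aa}rding's hyperbolicity argument, and is the standard external input I would rely on.
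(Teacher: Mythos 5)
Your argument is correct, but it takes a different route from the paper, which offers no proof at all and simply cites Spruck's survey \cite{S05} for this generalized Newton--MacLaurin inequality. Your reduction is the standard self-contained one: set $S_k=\sigma_k/C_n^k$, note that the claimed inequality is exactly the comparison of secant slopes $\frac{\log S_m-\log S_l}{m-l}\le\frac{\log S_r-\log S_s}{r-s}$ for the function $k\mapsto\log S_k$ on $\{0,\dots,m\}$ (with $s\le l<m$, $s<r\le m$), and deduce it from discrete log-concavity, i.e.\ Newton's inequality $S_k^2\ge S_{k-1}S_{k+1}$, plus positivity of $S_0,\dots,S_m$ on $\Gamma_m$; the two-step pivot through $\rho(s,m)$ is the usual three-chord argument and is fine. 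The one point worth correcting is your closing worry: no G{\aa}rding hyperbolicity input is needed here, because $\lambda$ is already a real $n$-tuple, so $P(t)=\prod_i(1+\lambda_i t)$ is automatically real-rooted and Newton's inequality $S_k^2\ge S_{k-1}S_{k+1}$ holds for \emph{all} $\lambda\in\mathbb{R}^n$ by the classical Rolle/derivative argument (reduce to a quadratic and use the discriminant); the cone condition $\lambda\in\Gamma_m$ is used only to guarantee $\sigma_1,\dots,\sigma_m>0$, so that the logarithms are defined and the chaining of slopes is legitimate. The hyperbolicity machinery would only be needed if one wanted the inequality for general hyperbolic polynomials rather than for eigenvalue vectors. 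So your proposal is a complete proof modulo that mislabeled dependency, and it buys self-containedness where the paper merely defers to the literature.
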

\begin{proof}
See \cite{S05}.
\end{proof}

\subsection{Basic notations and some key lemmas}
In this paper, $\nabla$ denotes the Levi-Civita connection on $(M , g)$ and the curvature tensor
is defined by
$$R(X, Y )Z = - \nabla_X \nabla_Y Z + \nabla_Y \nabla_X Z + \nabla_{[X,Y]}Z.$$
Let $\{e_1,e_2,\cdots,e_n\}$ be local frames on $M$ and denote $g_{ij}=g(e_i,e_j)$, $\{g^{ij}\}=\{g_{ij}\}^{-1}$,
while the Christoffel symbols $\Gamma^k_{ij}$ and curvature coefficients are given respectively by $\nabla_{e_i}e_j=\Gamma^k_{ij}e_k$ and
$$R_{ijkl}=g(R(e_k,e_l)e_j,e_i),\quad R^i_{jkl}=g^{im}R_{mjkl}.$$
We shall write $\nabla_i=\nabla_{e_i}$, $\nabla_{ij}=\nabla_i\nabla_j-\Gamma^k_{ij}\nabla_k$, etc.
For a differentiable function $u$ defined on $M$, we usually identify $\nabla u$ with its gradient,
and use $\nabla^2 u$ to denote its Hessian which is locally given by $\nabla_{ij} u= \nabla_i(\nabla_j u)
-\Gamma^k_{ij}\nabla_k u$. We note that $\nabla_{ij} u=\nabla_{ji} u$ and
\begin{equation}\label{req1}
  \nabla_{ijk} u-\nabla_{jik} u=R^l_{kij}\nabla_lu,
\end{equation}
\begin{equation}\label{req0}
  \nabla_{ij}(\nabla_ku) = \nabla_{ijk}u + \Gamma^l_{ik}\nabla_{jl}u +\Gamma^l_{jk}\nabla_{il}u + \nabla_{\nabla_{ij}e_k}u,
\end{equation}
\begin{equation}\label{req2}
  \nabla_{ijkl}u-\nabla_{ikjl}u=R^m_{ljk}\nabla_{im}u+\nabla_iR^m_{ljk}\nabla_mu,
\end{equation}
\begin{equation}\label{req3}
  \nabla_{ijkl}u-\nabla_{jikl}u=R^m_{kij}\nabla_{ml}u+R^m_{lij}\nabla_{km}u.
\end{equation}
From \eqref{req2} and \eqref{req3}, we obtain
\begin{eqnarray}\label{req4}
 \nonumber \nabla_{ijkl}u-\nabla_{klij}u&=& R^m_{ljk}\nabla_{im}u+\nabla_iR^m_{ljk}\nabla_mu+R^m_{lik}\nabla_{jm}u \\
   && +R^m_{jik}\nabla_{lm}u+R^m_{jil}\nabla_{km}u+\nabla_kR^m_{jil}\nabla_mu.
\end{eqnarray}
For convenience, we introduce the following notations
\[
U:=\nabla^2u+\chi(x,u,\nabla u),~U_{ij}:=\nabla_{ij}u+\chi^{ij}(x,u, \nabla u),
\]
\[
\ul{U}:=\nabla^2\ul{u}+\chi(x,\ul u,\nabla \ul u),~\ul{U}_{ij}:=\nabla_{ij}\ul{u}+\chi^{ij}(x,\ul u,\nabla\ul u),
\]
\[
G_k(U):= \frac{\sigma_k(U)}{\sigma_{k-1}(U)},\ \ G_l(U) := -\frac{\sigma_l(U)}{\sigma_{k-1}(U)},~ 0\leq l\leq k-2,
\]

\[
G(U):= G_k(U) + \sum_{l=0}^{k-2} \alpha_l(x) G_l(U),
\]
\[
G^{ij} :=\frac{\partial G}{\partial U_{ij}},~G^{ij,rs}:=\frac{\partial^2 G}{\partial U_{ij} \partial U_{rs}},~\chi^{ij}_{p_s}:=\frac{\partial \chi^{ij}}{\partial(\nabla_s u)}, ~~ 1 \leq i, j, r, s \leq n,
\]
and
\begin{equation}\label{ll}
  \mathcal{L}:=G^{ij}\nabla_{ij}+G^{ij}\chi^{ij}_{p_s}\nabla_s.
\end{equation}

Let $u\in C^{\infty}(\overline{M})$ be an admissible solution of the equation \eqref{1.1}. Under orthonormal local frames $\{e_1,\cdots,e_n\}$, then the equation \eqref{1.1} can be rewritten as the following form:
\begin{eqnarray}\label{K-eq1}
\left\{
\begin{aligned}
& G(U):=f(\lambda[U])=\alpha_{k-1}(x), && in~ M,\\
&u(x)=\varphi(x), && on~ \partial M.
\end{aligned}
\right.
\end{eqnarray}
For simplicity, we shall still write equation \eqref{1.1} in the form \eqref{K-eq1} even if $\{e_1,\cdots,e_n\}$ are not necessarily orthonormal, although more precisely it should be
$$G([\gamma^{ik}U_{kl}\gamma^{lj}])=\alpha_{k-1}(x),$$
where $\gamma^{ij}$ is the square root of $g^{ij}: \gamma^{ik}\gamma^{kj}=g^{ij}$. Whenever we differentiate the equation, it will make no difference as long as we use covariant derivatives. Under a local frame $\{e_1,\cdots,e_n\}$, we have
\begin{equation}\label{ku}
  \nabla_k U_{ij}= \nabla_{kij} u + \nabla_k \chi^{ij}(x, u, \nabla u)+ \chi_z^{ij}(x, u, \nabla u) \nabla_k u+\chi_{p_l}^{ij}(x, u, \nabla u) \nabla_{kl} u.
\end{equation}


In the establishment of the a priori estimates, the following lemma will play an important role. For more details, see \cite{GZ2019v2}.

\begin{lemma}\label{lem2.5}
If $U\in C^2(\overline{M})$  with $\lambda(U)\in \Gamma_{k-1}$ and $\alpha_l(x)>0 $ with $0 \leq l \leq k-2$,  then the operator $G(U)$
 is elliptic and concave. Moreover
\begin{eqnarray}\label{i1}
\nonumber\sum_{i=1}^n G^{ii} &\geq& \frac{n-k+1}{k} + \sum_{l=0}^{k-2} \frac{(n-k+2)(k-l-1)}{k-1}\alpha_l(x)\frac{\sigma_l\sigma_{k-2}}{\sigma_{k-1}^2}\\
&\geq&\frac{n-k+1}{k}.
\end{eqnarray}
\end{lemma}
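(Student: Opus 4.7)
The plan is to diagonalize: since $G$ is a symmetric function of the eigenvalues $\lambda(U)=(\lambda_1,\ldots,\lambda_n)$, I would choose an orthonormal frame in which $U$ is diagonal, so that $G^{ii}=\partial G/\partial\lambda_i$. All three assertions then reduce to scalar statements about $G$ as a function on $\Gamma_{k-1}$, following the framework of Guan--Zhang \cite{GZ2019v2}.

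For ellipticity, I would treat each term of $G=G_k+\sum_{l=0}^{k-2}\alpha_l G_l$ separately. The derivative $\partial G_k/\partial\lambda_i$ of the Hessian quotient $\sigma_k/\sigma_{k-1}$ is classically positive on $\Gamma_{k-1}$. For $0\le l\le k-2$, a short computation using $\partial\sigma_j/\partial\lambda_i=\sigma_{j-1}(\lambda|i)$ together with the splitting $\sigma_j=\sigma_j(\lambda|i)+\lambda_i\sigma_{j-1}(\lambda|i)$ gives
$$\frac{\partial G_l}{\partial\lambda_i}=\frac{\sigma_l(\lambda|i)\sigma_{k-2}(\lambda|i)-\sigma_{l-1}(\lambda|i)\sigma_{k-1}(\lambda|i)}{\sigma_{k-1}^2},$$
which is nonnegative: Proposition \ref{prop2.4} applied to $\lambda|i$ handles the case $\sigma_{k-1}(\lambda|i)\ge 0$, and in the remaining case the subtracted term is already nonpositive. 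Since the weights $\alpha_l$ are positive, $\partial G/\partial\lambda_i>0$, so $G$ is elliptic. For concavity, $G_k$ is concave on $\Gamma_{k-1}$ by the classical theory of Hessian quotients, and each $-\sigma_l/\sigma_{k-1}$ ($0\le l\le k-2$) is concave on $\Gamma_{k-1}$ as well (see \cite{GZ2019v2}); a nonnegative linear combination of concave functions is concave, so $G$ is concave.

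For the lower bound on $\sum_i G^{ii}$, summing over $i$ via $\sum_i\sigma_j(\lambda|i)=(n-j)\sigma_j$ yields
$$\sum_i\frac{\partial G_k}{\partial\lambda_i}=\frac{(n-k+1)\sigma_{k-1}^2-(n-k+2)\sigma_k\sigma_{k-2}}{\sigma_{k-1}^2},$$
and Newton's inequality $k(n-k+2)\sigma_k\sigma_{k-2}\le(k-1)(n-k+1)\sigma_{k-1}^2$ (itself a case of Proposition \ref{prop2.4}) reduces this to $(n-k+1)/k$. For each $l\le k-2$, the analogous sum combined with Proposition \ref{prop2.4} applied with indices $(m,l',r,s)=(k-1,l,k-2,l-1)$, which yields $(n-l+1)(k-1)\sigma_{l-1}\sigma_{k-1}\le(n-k+2)l\,\sigma_l\sigma_{k-2}$, produces exactly $\frac{(n-k+2)(k-l-1)}{k-1}\cdot\frac{\sigma_l\sigma_{k-2}}{\sigma_{k-1}^2}$. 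Weighting by $\alpha_l>0$ and summing gives the first asserted inequality; discarding the nonnegative $l$-terms gives the weaker bound $(n-k+1)/k$. The only real care point is combinatorial bookkeeping---choosing the right Newton--MacLaurin instance and tracking the binomials $C_n^j$ so that the coefficients $(n-k+1)/k$ and $(n-k+2)(k-l-1)/(k-1)$ come out exactly as stated.
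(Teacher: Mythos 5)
Your route is genuinely different from the paper's: the paper disposes of Lemma \ref{lem2.5} with a single citation to \cite{Chen212} (with the surrounding discussion deferring to \cite{GZ2019v2}), whereas you give a direct eigenvalue computation. That computation checks out. Your formula for $\partial G_l/\partial\lambda_i$ is correct, the two-case sign argument (using $\lambda|i\in\Gamma_{k-2}$, and $\lambda|i\in\Gamma_{k-1}$ of $\mathbb{R}^{n-1}$ when $\sigma_{k-1}(\lambda|i)>0$) does give $\partial G_l/\partial\lambda_i\ge 0$, and strict ellipticity indeed comes from the $l=0$ term since $\alpha_0>0$ (note that $\partial G_k/\partial\lambda_i$ itself is only nonnegative on $\Gamma_{k-1}$, not strictly positive, but your sum argument does not need more). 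For the trace bound, $\sum_i\sigma_j(\lambda|i)=(n-j)\sigma_j$ gives exactly your expression for $\sum_i\partial G_k/\partial\lambda_i$, Newton's inequality in the form $k(n-k+2)\sigma_k\sigma_{k-2}\le(k-1)(n-k+1)\sigma_{k-1}^2$ reduces it to $(n-k+1)/k$, and the Newton--MacLaurin instance $(m,l',r,s)=(k-1,l,k-2,l-1)$ (both exponents equal $1/(k-1-l)$, so it cross-multiplies cleanly, and the case $l=0$ is trivial since $\sigma_{-1}=0$) yields $(k-1)(n-l+1)\sigma_{l-1}\sigma_{k-1}\le l(n-k+2)\sigma_l\sigma_{k-2}$, which is precisely what produces the coefficient $(n-k+2)(k-l-1)/(k-1)$; discarding the manifestly nonnegative $l$-terms gives the second inequality. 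Two caveats. First, and this is the real soft spot: you justify concavity of $G_k=\sigma_k/\sigma_{k-1}$ on $\Gamma_{k-1}$ by ``the classical theory of Hessian quotients,'' but the classical results (e.g.\ \cite{CNS85}) give concavity only on the smaller cone $\Gamma_k$; concavity on the enlarged cone $\Gamma_{k-1}$ is exactly the nontrivial point established in \cite{GZ2019v2} (and used in \cite{Chen212}). Either cite it there or supply a proof; as written, that one sentence glosses over the hardest part of the lemma, while your treatment of $-\sigma_l/\sigma_{k-1}$ (concave as the composition of a convex decreasing function with the concave positive quotient, per \cite{GZ2019v2}) is fine. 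Second, a small technical point: Proposition \ref{prop2.4} with top index $k$ requires $\lambda\in\Gamma_k$, and you only have $\lambda\in\Gamma_{k-1}$, so the inequality $k(n-k+2)\sigma_k\sigma_{k-2}\le(k-1)(n-k+1)\sigma_{k-1}^2$ needs the same two-case treatment you already used for $\lambda|i$: if $\sigma_k\le 0$ it is trivial, and if $\sigma_k>0$ then $\lambda\in\Gamma_k$ and the proposition applies. With those two repairs your argument is a complete, self-contained proof of the lemma, which is more than the paper itself records.
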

\begin{proof}
See \cite{Chen212}.
\end{proof}

\section{a priori estimates}

\subsection{$C^1$ estimates}

In order to estimate the gradient of $(1.1)$, we use a method similar to Theorem 4.2 in \cite{GBH2016}, but there is no sign requirement  for the right hand function $\alpha_{k-1}(x)$.
\begin{theorem}\label{C1}
Let $(M, g)$ be a Riemannian manifold
with nonnegative sectional curvature, $3 \leq k \leq n$, $\varphi, \alpha_l$ be smooth functions with $\alpha_l>0$ for $0\leq l\leq k-2$. Assume that  the $(0,2)$-tensor $\chi=\chi(x, p)$ satisfies Condition \ref{cond-cxw-01} and Condition \ref{cond-cxw-02},
 there exists an admissible subsolution  $\underline{u}\in C^2(\overline{M})$  satisfying \eqref{cond-cxw-03}. Let
 $u\in C^{\infty} (\overline{M})$ be an admissible solution  for the equation \eqref{1.1}, then
 $$\max_{\overline{M}} |\nabla u| \leq C$$
 for a constant $C$ depending on $n,k,l$, $\|u\|_{C^0}$, $\|\ul{u}\|_{C^2}$, $\|\alpha_{k-1}\|_{C^1}$, $\|\alpha_l\|_{C^1}$ and $\inf\alpha_l$ with $0 \leq l \leq k-2$.
\end{theorem}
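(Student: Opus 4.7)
Following the template of Guan--Jiao \cite{GBH2016}, we estimate $\sup_M |\nabla u|$ by the maximum principle applied to the auxiliary function $W = |\nabla u|^2 e^{\phi(u)}$, where $\phi$ is a convex decreasing function of $u$ to be chosen at the end. Either the maximum of $W$ is attained on $\partial M$, in which case we conclude by a boundary analysis using the subsolution, or at an interior point, in which case we derive a pointwise bound from $\nabla W = 0$ and $\mathcal{L}W \le 0$.

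\textbf{Boundary reduction.} The subsolution inequality \eqref{cond-cxw-03} rewrites as $G(\underline U) \ge \alpha_{k-1} = G(U)$ after dividing by $\sigma_{k-1}(\underline U) > 0$. Since $G$ is elliptic and concave on $\Gamma_{k-1}$ (Lemma \ref{lem2.5}) and $\underline u = u = \varphi$ on $\partial M$, the comparison principle yields $\underline u \le u$ on $\overline M$. Hence $\partial_\nu(u - \underline u) \le 0$ on $\partial M$ bounds the outward normal derivative of $u$ from above by $\|\underline u\|_{C^1}$; an upper barrier $\overline u = \varphi + K\,d_{\partial M}$, with $K$ large depending on $\|u\|_{C^0}$ and the geometry near $\partial M$, supplies the other side. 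Combined with the tangential identity $\nabla^T u = \nabla^T \varphi$, this yields $|\nabla u| \le C$ on $\partial M$.

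\textbf{Interior calculation.} Suppose the maximum of $W$ is attained at an interior point $x_0$, and choose an orthonormal frame at $x_0$ diagonalizing $(U_{ij})$ with $\nabla u = |\nabla u| e_1$. Differentiating $|\nabla u|^2$ twice, applying the Ricci identity \eqref{req1} to commute third-order derivatives, and then differentiating the equation $G(U) = \alpha_{k-1}$ in direction $e_k$ to substitute for $G^{ii}(\nabla_{kii}u + \chi^{ii}_{p_s} u_{ks})$ --- the operator $\mathcal L$ in \eqref{ll} is designed precisely so that this combination matches $\mathcal L(\nabla_k u)$ up to $G^{ii}\chi^{ii}_{x_k}$ --- one arrives at
\begin{equation*}
\mathcal{L}(|\nabla u|^2) \;\ge\; 2G^{ii}(u_{ki})^2 + 2G^{ii}R^{l}_{iki}u_k u_l - C\bigl(1 + |\nabla u|^{1+\gamma_1}\bigr)\sum_i G^{ii} - C|\nabla u|,
\end{equation*}
where the first inequality of Condition \ref{cond-cxw-02} furnishes the bound $u_k G^{ii}\chi^{ii}_{x_k} \le \overline\psi_1(1 + |\nabla u|^{\gamma_1})\sum_i G^{ii}$. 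The term $G^{ii}R^{l}_{iki}u_k u_l = |\nabla u|^2\sum_i G^{ii}K(e_i,e_1)$ is non-negative by the hypothesis of non-negative sectional curvature. The condition $\nabla W(x_0) = 0$ reads $u_k u_{1k} = -\tfrac{1}{2}\phi'(u)|\nabla u|^2$, so $|u_{11}| \ge \tfrac{1}{2}|\phi'(u)||\nabla u|$, while the mixed derivatives $u_{1k}$ ($k \neq 1$) are controlled via the first-order correction $G^{ij}\chi^{ij}_{p_s}\nabla_s$ in $\mathcal L$ together with the off-diagonal growth bound in Condition \ref{cond-cxw-02}.

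\textbf{Conclusion and main obstacle.} Adding $\mathcal L(\phi(u)) = \phi'(u)\mathcal L u + \phi''(u)G^{ii}u_i^2$, with $\mathcal L u = G^{ii}U_{ii} - G^{ii}\chi^{ii} + G^{ii}\chi^{ii}_{p_s}u_s$ bounded using admissibility and Condition \ref{cond-cxw-02}, and invoking $\sum_i G^{ii} \ge (n-k+1)/k > 0$ from Lemma \ref{lem2.5}, the inequality $\mathcal L W(x_0) \le 0$ reduces after division by $W$ to
\begin{equation*}
c|\phi'(u)|^2 |\nabla u|^2 \;\le\; C\bigl(1 + |\phi'(u)|\,|\nabla u|^2 + |\nabla u|^{1+\gamma_1}\bigr).
\end{equation*}
With $\gamma_1 \in (0,2)$, taking $\phi(u) = -A(u - \inf_M u)$ for $A$ sufficiently large forces $|\nabla u|(x_0) \le C$. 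The principal obstacle is the $\nabla u$-dependence of $\chi$: differentiating the equation produces the extra terms $\chi^{ii}_{x_k}$ and $\chi^{ii}_{p_s}u_{ks}$; the latter is absorbed by the very definition of $\mathcal L$, but the former can only be dominated by the good term $|\phi'|^2|\nabla u|^2\sum_i G^{ii}$ thanks to the subquadratic growth $\gamma_1,\gamma_2 \in (0,2)$ in Condition \ref{cond-cxw-02}. Without these exponents the argument would fail. The non-negative sectional curvature hypothesis is used only to keep the commutator curvature term on the favorable side, and, crucially, no sign assumption on $\alpha_{k-1}$ is required at any step.
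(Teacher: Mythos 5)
There is a genuine gap, and it sits exactly at the point that distinguishes this mixed Hessian equation from a single Hessian quotient. When you differentiate $G(U)=\alpha_{k-1}$ in the direction $e_s$ you do not only get $\nabla_s\alpha_{k-1}$: you get $G^{ij}\nabla_s U_{ij}=\nabla_s\alpha_{k-1}-\sum_{l=0}^{k-2}\nabla_s\alpha_l\,G_l$ with $G_l=-\sigma_l/\sigma_{k-1}$, and for an admissible solution ($\lambda(U)\in\Gamma_{k-1}$ only) the ratios $\sigma_l/\sigma_{k-1}$ have \emph{no a priori upper bound}. Your inequality for $\mathcal{L}(|\nabla u|^2)$ absorbs all of this into a harmless ``$-C|\nabla u|$'', which is false. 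The paper's proof exists precisely to handle these terms: it splits into the cases $\sigma_l/\sigma_{k-1}\le N$ and $\sigma_l/\sigma_{k-1}>N$, and in the second case uses the refined lower bound \eqref{i1}, whose extra summand $\alpha_l\,\sigma_l\sigma_{k-2}/\sigma_{k-1}^2$ together with Newton--MacLaurin ($\sigma_{k-2}/\sigma_{k-1}\ge(\sigma_l/\sigma_{k-1})^{1/(k-1-l)}\ge N^{1/(k-1-l)}$) dominates the unbounded terms; this is where the hypothesis $\inf\alpha_l>0$ ($0\le l\le k-2$) and the absence of a sign condition on $\alpha_{k-1}$ are actually reconciled. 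Your sketch never confronts this, so the key differentiated-equation estimate does not hold as written. The same unboundedness re-enters through your term $-A\,\mathcal{L}u$: by homogeneity $G^{ii}U_{ii}=\sigma_k/\sigma_{k-1}+\sum_l(k-1-l)\alpha_l\,\sigma_l/\sigma_{k-1}$, so $\phi'(u)\mathcal{L}u$ is unboundedly negative, not ``bounded using admissibility''.

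A second, related problem is your choice of test function. With $\phi(u)=-A(u-\inf u)$ linear, the good term $\phi''G^{ii}u_i^2$ vanishes, and the only remaining positive terms are $G^{ii}(u_{ki})^2$ (essentially $G^{11}u_{11}^2$ in your frame), for which there is no lower bound because $G^{11}$ can degenerate; your concluding inequality $c|\phi'|^2|\nabla u|^2\le C(1+|\phi'||\nabla u|^2+|\nabla u|^{1+\gamma_1})$ has a $G$-free left-hand side that is never actually produced. The paper avoids both difficulties by taking $\omega\psi^{-\delta}$ with $\psi$ built from $u-\underline{u}$, so that the subsolution enters the \emph{interior} estimate through Guan's dichotomy: either $|\nu_\mu-\nu_\lambda|\ge\beta$, giving $\mathcal{L}(\underline{u}-u)\ge\varepsilon(1+\sum_iG^{ii})$, or $|\nu_\mu-\nu_\lambda|<\beta$, giving $G^{ii}\ge\frac{\beta}{\sqrt n}\sum_kG^{kk}$ and hence a genuine $|\nabla u|^4\sum_iG^{ii}$ good term from $G^{ii}U_{ii}^2$ via the critical-point relation. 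Your proposal drops the subsolution from the interior argument altogether, which removes the mechanism that closes the estimate. (Minor further points: the critical-point identity should give $|u_{11}|\ge\tfrac12|\phi'||\nabla u|^2$, not $\tfrac12|\phi'||\nabla u|$, and the upper barrier $\varphi+K\,d_{\partial M}$ needs a supersolution property, e.g.\ a $-Nd^2$ correction, before the comparison you invoke on $\partial M$ is justified.)
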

\begin{proof}
As in \cite{GBH2016}, in order to derive the $C^0$ estimates and $C^1$ estimates, we need to restrict $\chi=\chi(x,p)$. Since $\chi$ and $\alpha_{k-1}$  are assumed to be independent of $u$, by the comparison principle, it is easy to obtain
$$\max_{\bar{M}} | u| + \max_{\partial M} |\nabla u| \leq C. $$
Hence, we only need to establish the interior gradient estimates.
Let $\omega=|\nabla u|$, $\psi=(u-\underline{u})+\sup_M(\underline{u}-u)+1$. Assume that $\omega\psi^{-\delta}$ achieves a positive maximum at an interior point $x_0\in M$ where $\delta\in(0,\frac{1}{2})$ is a constant. We may choose the local orthonormal frame $\{e_1,e_2,\cdots,e_n\}$ about $x_0$ such that $\nabla_{e_i}e_j=0$ at $x_0$ and $\{U_{ij}(x_0)\}$ is diagonal.

Thus the function $\log\omega-\delta\log\psi$ attains its maximum at $x_0$ for $i=1,\cdots,n$, hence at $x_0$,
\begin{equation}\label{gra1}
  \frac{\nabla_i\omega}{\omega}-\frac{\delta\nabla_i\psi}{\psi}=0,
\end{equation}
 \begin{equation}\label{gra2}
 \frac{\nabla_{ii}\omega}{\omega}+\frac{(\delta-\delta^2)|\nabla_i\psi|^2}{\psi^2}-\frac{\delta\nabla_{ii}\psi}{\psi}\leq 0.
 \end{equation}
 Since $(M,g)$ has nonnegative sectional curvature, in orthonormal local frame,
 $$R^k_{iil}\nabla_ku\nabla_lu\geq 0.$$
 Recall that $\nabla_{ij}u=\nabla_{ji}u$ and $\nabla_{ijk}u-\nabla_{jik}u=R^l_{kij}\nabla_lu$. Then
 \begin{eqnarray}\label{ome}
   \nonumber\omega\nabla_{ii}\omega &=& \sum_s\nabla_su\nabla_{iis}u+\sum_s\nabla_{is}u\nabla_{is}u-\nabla_i\omega\nabla_i\omega \\
  \nonumber &=& \sum_s(\nabla_{sii}u+R^k_{iis}\nabla_ku)\nabla_su+\sum_s(U_{is}-\chi^{is})^2-(\nabla_i\omega)^2 \\
   \nonumber&\geq&\sum_s\nabla_su\nabla_sU_{ii}-\sum_s\nabla_su\nabla_s\chi^{ii}-\sum_s\nabla_su\chi_{p_k}^{ii}\nabla_{sk}u\\
   &&+\frac{1}{2}U_{ii}^2-\sum_s(\chi^{is})^2-(\nabla_i\omega)^2.
 \end{eqnarray}
 Then by \eqref{ll}, \eqref{gra1}-\eqref{ome} and condition \eqref{3.1}, we get at $x_0$,
 \begin{eqnarray}\label{imp}
  \nonumber 0&\geq&\frac{G^{ii}\nabla_su\nabla_sU_{ii}}{\omega^2}-\frac{G^{ii}\nabla_su\nabla_s\chi^{ii}}{\omega^2}-\frac{G^{ii}\nabla_su\chi^{ii}_{p_k}
   \nabla_{sk}u}{\omega^2}\\
   \nonumber&&+\frac{(\delta-2\delta^2)G^{ii}|\nabla_i\psi|^2}{\psi^2}-\frac{\delta G^{ii}\nabla_{ii}\psi}{\psi}+\frac{1}{2\omega^2}G^{ii}U_{ii}^2-\frac{1}{\omega^2}\sum_sG^{ii}(\chi^{is})^2\\
  \nonumber &\geq&\frac{G^{ii}\nabla_su\nabla_sU_{ii}}{|\nabla u|^2}+\frac{\delta}{\psi}\mathcal{L}(\underline{u}-u)+CG^{ii}|\nabla_i\psi|^2+\frac{1}{2|\nabla u|^2}
   G^{ii}U_{ii}^2\\
   &&-C(|\nabla u|^{-2}+|\nabla u|^{\gamma_1-2}+|\nabla u|^{\gamma_2-2})\sum_iG^{ii}.
 \end{eqnarray}

  Next we need to deal with the term $\frac{\nabla_su}{|\nabla u|^2}G^{ii}\nabla_sU_{ii}$, and we can divide into two cases:

$\mathbf{Case~1:}$ If there is a positive constant $N$ such that $\frac{\sigma_l}{\sigma_{k-1}}\leq N,\quad l=0,\cdots,k-2$, then for some positive constant $C_0$,
\begin{eqnarray*}
  C_0\sum_iG^{ii}+\frac{\nabla_su}{|\nabla u|^2}G^{ii}\nabla_sU_{ii} &=&C_0\sum_iG^{ii}+\frac{\nabla_su}{|\nabla u|^2}\left(\nabla_s\alpha_{k-1}-\sum_{l=0}^{k-2}\nabla_s\alpha_lG_l\right) \\
  &\geq&\frac{C_0(n-k+1)}{k}-\frac{C}{|\nabla u|}\sum_{l=0}^{k-2}\frac{\sigma_l}{\sigma_{k-1}}-\frac{C}{|\nabla u|}\\
  &\geq&\frac{C_0(n-k+1)}{k}-\frac{C(N+1)}{|\nabla u|}\geq0,
\end{eqnarray*}
by choosing $|\nabla u|$ large enough.

$\mathbf{Case~2:}$ If $\frac{\sigma_l}{\sigma_{k-1}}> N$, then
\begin{equation*}
  \frac{\sigma_{k-2}}{\sigma_{k-1}}\geq\left(\frac{\sigma_l}{\sigma_{k-1}}\right)^{\frac{1}{k-1-l}}\geq N^{\frac{1}{k-1-l}}.
\end{equation*}
Hence by \eqref{i1}, for some positive constant $C_0$,
\begin{eqnarray*}
  &&C_0\sum_iG^{ii}+\frac{\nabla_su}{|\nabla u|^2}G^{ii}\nabla_sU_{ii} \\ &\geq&\frac{C_0(n-k+1)}{k}+\sum_{l=0}^{k-2}C_0C(n,k,l)\alpha_l(x)\frac{\sigma_l\sigma_{k-2}}{\sigma_{k-1}^2}+\frac{\nabla_su}{|\nabla u|^2}\left(\nabla_s\alpha_{k-1}-\sum_{l=0}^{k-2}\nabla_s\alpha_lG_l\right) \\
  &\geq&\sum_{l=0}^{k-2}\left(C_0C(n,k,l)\inf\alpha_l(x)N^{\frac{1}{k-1-l}}-\frac{C}{|\nabla u|}\right)\frac{\sigma_l}{\sigma_{k-1}}+\frac{C_0(n-k+1)}{k}-\frac{C}{|\nabla u|}\geq0,
\end{eqnarray*}
by choosing $|\nabla u|$ large enough.

So $\frac{\nabla_su}{|\nabla u|^2}G^{ii}\nabla_sU_{ii}\geq-C_0\sum_iG^{ii}$. Let $\lambda=\lambda(U)$, $\mu=\lambda(\underline{U})$ be the eigenvalues of $U$ and $\underline{U}$ respectively, and $\beta\in(0, \frac{1}{2\sqrt{n}})$ be a uniform constant such that
$$\nu_{\mu}-2\beta\mathbf{1}\in\Gamma_n, \quad\forall x\in \overline{M},$$
where $\nu_{\lambda}:=\frac{Df(\lambda)}{|Df(\lambda)|}$ is the unit normal vector to the level hypersurface $\partial\Gamma^{f(\lambda)}$ for $\lambda\in \Gamma$ and $\mathbf{1}=(1,\cdots,1)\in \mathbb{R}^n$, $\Gamma$ is a symmetric open and convex cone in $\mathbb{R}^n$ with $\Gamma_n\subset\Gamma$.

First, we consider the case $|\nu_{\mu}-\nu_{\lambda}|\geq\beta$, by Lemma 2.1 in \cite{GBH2016}, we have for some uniform constant $\varepsilon>0$,
$$G^{ii}(\underline{U}_{ii}-U_{ii})\geq G(\underline{U})-G(U)+\varepsilon(1+\sum_iG^{ii}).$$
By condition \eqref{1.3}, we get
\begin{eqnarray*}
  \chi_{p_k}^{ii}\nabla_k(\underline{u}-u)&\geq&\chi^{ii}(x,\nabla\underline{u})-\chi^{ii}(x,\nabla u),
\end{eqnarray*}
then
\begin{eqnarray*}
  \mathcal{L}(\underline{u}-u) &=&G^{ij}\nabla_{ij}(\underline{u}-u)+G^{ij}\chi_{p_s}^{ij}\nabla_s(\underline{u}-u)\\
  &\geq& G^{ii}(\underline{U}_{ii}-U_{ii})\geq \varepsilon(1+\sum_iG^{ii}).
\end{eqnarray*}
Hence by \eqref{imp} and choosing $C_0\leq \inf_M\frac{\varepsilon\delta}{2\psi}$, $|\nabla u|$ large enough, we derive
\begin{eqnarray*}
  0 &\geq& \frac{\varepsilon\delta}{\psi}\sum_iG^{ii}+\frac{\varepsilon\delta}{\psi}-\left(C_0+C(|\nabla u|^{-2}+|\nabla u|^{\gamma_1-2}+|\nabla u|^{\gamma_2-2})\right)\sum_i G^{ii} \\
  &\geq&\left(\frac{\varepsilon\delta}{2\psi}-C(|\nabla u|^{-2}+|\nabla u|^{\gamma_1-2}+|\nabla u|^{\gamma_2-2})\right)\sum_i G^{ii}\\
  &\geq&\left(C_0-C(|\nabla u|^{-2}+|\nabla u|^{\gamma_1-2}+|\nabla u|^{\gamma_2-2})\right)\sum_i G^{ii},
\end{eqnarray*}
which implies $|\nabla u(x_0)|\leq C$.

Next we consider the case $|\nu_{\mu}-\nu_{\lambda}|<\beta$, then we have $G^{ii}\geq\frac{\beta}{\sqrt{n}}\sum_kG^{kk}$ for $1\leq i \leq n$ and $\mathcal{L}(\underline{u}-u)\geq 0$. Hence by \eqref{imp} and choosing $|\nabla u|$ large enough, we have
\begin{equation*}
  C|\nabla u|^4\sum_iG^{ii}\leq \left(C_0|\nabla u|^2+C(1+|\nabla u|^{\gamma_1}+|\nabla u|^{\gamma_2})\right)\sum_iG^{ii}.
\end{equation*}
Hence we derive $|\nabla u(x_0)|\leq C$ and the proof is completed.
\end{proof}

\subsection{Interior $C^{2}$ estimates}
In this section, we derive the following interior $C^2$ estimates. The treatment of this section follows from \cite{GBH2016}.
\begin{theorem}\label{theorem 3.1}
	Let $u \in C^{4}(M)\cap C^{2}(\overline{M})$ be an admissible solution of the equation \eqref{1.1}, $\varphi \in C^{\infty}(\overline {M})$,
suppose Condition \ref{cond-cxw-01} holds and there exists an admissible subsolution  $\underline{u}\in C^2(\overline{M})$  satisfying \eqref{cond-cxw-03}, then
	\begin{equation*}
			\max_M\abs{\nabla^{2}u} \leq C(1+\max_{\partial M}\abs{\nabla^{2}u}),
	\end{equation*}
	where $C$ is a constant depending on $n,k,l$, $\|u\|_{C^1}$, $\|\underline{u}\|_{C^2}$, $\|\chi^{ij}\|_{C^2}$, $\|\alpha_{k-1}\|_{C^2}$, $\|\alpha_l\|_{C^2}$ and $\inf\alpha_l$ with $0 \leq l \leq k-2$.
\end{theorem}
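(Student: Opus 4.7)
The plan is to prove the interior global bound by the maximum principle applied to a suitable test function involving the largest eigenvalue of $U = \nabla^2 u + \chi$. Let $\eta(x)$ denote the largest eigenvalue of $U(x)$ and consider
$$W = \log \eta + \phi(|\nabla u|^2) + A(\ul{u} - u),$$
where $\phi$ is an increasing function (for instance $\phi(t) = -\tfrac{1}{2}\log(2\sup_M|\nabla u|^2 + 1 - t)$) and $A > 0$ is a large constant to be chosen. I would suppose that $W$ attains its maximum at an interior point $x_0 \in M$; otherwise the conclusion is immediate from the boundary value. Choose an orthonormal frame at $x_0$ diagonalizing $U$ with $U_{11} \geq U_{22} \geq \cdots \geq U_{nn}$. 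After the standard perturbation to make $\log U_{11}$ smooth near $x_0$, the critical relations $\nabla_i W = 0$ and $\mathcal{L} W \leq 0$ hold at $x_0$, where $\mathcal{L}$ is the linearized operator defined in \eqref{ll}.

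Next I would expand $\mathcal{L} W$ term by term. Differentiating $G(U) = \alpha_{k-1}$ twice in the direction $e_1$ and invoking the commutation identities \eqref{req1}--\eqref{req4} together with \eqref{ku} gives
$$G^{ii}\nabla_{ii} U_{11} = \nabla_{11}\alpha_{k-1} - G^{ij,rs}\nabla_1 U_{ij}\nabla_1 U_{rs} - G^{ii}\chi^{ii}_{p_l}\nabla_{11 l} u + E_1,$$
where $E_1$ collects curvature and lower-order contributions controlled by $\|u\|_{C^1}$ and $\|\chi^{ij}\|_{C^2}$. Concavity of $G$ on the G{\aa}rding cone $\Gamma_{k-1}$ yields the Andrews--Gerhardt-type inequality
$$-G^{ij,rs}\nabla_1 U_{ij}\nabla_1 U_{rs} \geq 2\sum_{i>1}\frac{G^{ii} - G^{11}}{U_{11} - U_{ii}}(\nabla_1 U_{1i})^2.$$
For $\mathcal{L}(|\nabla u|^2)$, applying the Ricci identity and substituting the linearized equation produce a dominant positive term $\sum_i G^{ii} U_{ii}^2$, with error terms from $\chi^{ij}_{p_s}$-couplings and the nonnegativity of $\chi^{\xi\xi}_z$ (Condition \ref{cond-cxw-01}) that grow subquadratically in $U_{11}$.

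The term $A\,\mathcal{L}(\ul{u}-u)$ I would handle by the same dichotomy as in Theorem \ref{C1}. Fix $\beta\in(0,\tfrac{1}{2\sqrt{n}})$ with $\nu_\mu - 2\beta\mathbf{1}\in\Gamma_n$. In the good case $|\nu_\mu-\nu_\lambda|\geq\beta$, Lemma 2.1 of \cite{GBH2016} together with the concavity of $\chi^{\xi\xi}$ in $p$ yields $\mathcal{L}(\ul{u}-u)\geq \varepsilon(1+\sum_i G^{ii})$ for some uniform $\varepsilon>0$, providing the crucial coercivity. In the bad case $|\nu_\mu-\nu_\lambda|<\beta$ one has the uniform lower bound $G^{ii}\geq \tfrac{\beta}{\sqrt n}\sum_k G^{kk}$ for each $i$, so the positive $\sum_i G^{ii} U_{ii}^2$ term alone bounds $U_{11}$ directly once it exceeds a constant.

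The main obstacle I anticipate is the third-order term $G^{ii}\chi^{ii}_{p_l}\nabla_{11 l} u$ produced by the $\nabla u$-dependence of $\chi$, which is absent in \eqref{fjt}. Using the critical equation $\nabla_i W = 0$ to rewrite $\nabla_i U_{11}$ in terms of $\phi'\,\nabla_i|\nabla u|^2$ and $A\nabla_i(\ul u - u)$, and commuting one derivative via \eqref{req1} so that $\nabla_{11l}u = \nabla_l U_{11} + \text{(controlled)}$, this bad term can be reduced to a quantity of the form $O(U_{11})\sum_i G^{ii}$ modulo contributions absorbed by the concavity term $2\sum_{i>1}\frac{G^{ii}-G^{11}}{U_{11}-U_{ii}}(\nabla_1 U_{1i})^2$ and the leading $\sum_i G^{ii}U_{ii}^2$. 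Choosing $A$ large relative to the constants from $\|\chi^{ij}\|_{C^2}$, $\|\alpha_l\|_{C^2}$, $\inf \alpha_l$, and taking $\phi'$ sufficiently large, all bad terms are absorbed. The inequality $\mathcal{L} W\leq 0$ at $x_0$ then forces $U_{11}(x_0) \leq C$, which, combined with $\sum_i U_{ii} = \sigma_1(U) > 0$ (since $\lambda(U)\in\Gamma_{k-1}$), controls all eigenvalues of $U$ and hence of $\nabla^2 u$, establishing $\max_M |\nabla^2 u| \leq C(1 + \max_{\partial M}|\nabla^2 u|)$.
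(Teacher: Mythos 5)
Your overall architecture (maximum principle for $\log U_{11}+\phi$ with a gradient term and a subsolution term, dichotomy on $|\nu_\mu-\nu_\lambda|$, concavity to kill the second-derivative quadratic) matches the paper's, but there is a genuine gap at the step where you differentiate the equation twice. You write the twice-differentiated equation as $G^{ii}\nabla_{ii}U_{11}=\nabla_{11}\alpha_{k-1}-G^{ij,rs}\nabla_1U_{ij}\nabla_1U_{rs}-G^{ii}\chi^{ii}_{p_l}\nabla_{11l}u+E_1$ with $E_1$ ``controlled by $\|u\|_{C^1}$ and $\|\chi^{ij}\|_{C^2}$.'' This ignores the explicit $x$-dependence of the operator through the coefficients $\alpha_l(x)$: as in \eqref{11} and \eqref{12}, differentiation produces the terms $\sum_{l}\nabla_{11}\alpha_l\,G_l$, $2\sum_l\nabla_1\alpha_l\,G_l^{ij}\nabla_1U_{ij}$ and $\sum_l\alpha_l G_l^{ij,rs}\nabla_1U_{ij}\nabla_1U_{rs}$, where $G_l=-\sigma_l/\sigma_{k-1}$. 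These are \emph{not} lower-order: $\sigma_l/\sigma_{k-1}$ has no a priori bound, and the cross term is linear in third derivatives with no sign. Handling them is precisely the mixed-Hessian difficulty of this theorem, and it is where the hypotheses $\alpha_l>0$ and the dependence on $\inf\alpha_l$ enter. The paper closes this by (i) using the concavity of $(-1/G_l)^{1/(k-1-l)}$ to complete the square, as in \eqref{fra}, which reduces the bad contributions to residuals of the form $\frac{(\nabla_1\alpha_l)^2}{\alpha_l}G_l$ and $\nabla_{11}\alpha_l G_l$; and (ii) a further dichotomy $\sigma_l/\sigma_{k-1}\le N$ versus $>N$, in the latter case invoking the refined lower bound \eqref{i1} of Lemma \ref{lem2.5}, $\sum_iG^{ii}\ge \frac{n-k+1}{k}+c(n,k,l)\sum_l\alpha_l\frac{\sigma_l\sigma_{k-2}}{\sigma_{k-1}^2}$, together with Newton--MacLaurin to show that $\sum_iG^{ii}$ itself dominates the unbounded residuals (see \eqref{b2}--\eqref{b3}). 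Your proposal, as written, cannot absorb these terms: the Andrews--Gerhardt inequality only controls the quadratic in $-G^{ij,rs}$ at fixed $x$, and neither the subsolution coercivity nor the $\sum_iG^{ii}U_{ii}^2$ term dominates a quantity proportional to $\sigma_l/\sigma_{k-1}$.

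A secondary point: you assert the $\chi$-error terms ``grow subquadratically in $U_{11}$.'' The term coming from $\nabla_{11}\chi^{ii}$ contains $\chi^{ii}_{p_sp_t}\nabla_{1s}u\nabla_{1t}u\sim \chi^{ii}_{p_1p_1}U_{11}^2$, which is genuinely quadratic; it is only harmless because \eqref{1.3} (concavity of $\chi^{\xi\xi}$ in $p$) gives it a favorable sign, as used in passing from \eqref{0g} to \eqref{rs1}. You should state this use of Condition \ref{cond-cxw-01} explicitly rather than only invoking concavity in $p$ in the subsolution step. Apart from these issues, your use of Andrews--Gerhardt in place of the paper's Urbas-type set $J=\{i:3U_{ii}\le -U_{11}\}$ is a reasonable alternative for the pure concavity term, but it does not substitute for the missing treatment of the $G_l$ terms described above.
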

\begin{proof}
	Consider the auxiliary function
   $$W(x)=\max_{\xi\in T_xM,|\xi|=1}(\nabla_{\xi\xi}u+\chi^{\xi\xi}(x,u,\nabla u))e^{\phi},$$
   where $\phi=\frac{a}{2}|\nabla u|^2+b(\underline{u}-u)$ and $a\in(0,1)$, $b$ are constants to be determined later. Assume that $W(x)$ attains its maximum at an interior point $x_0\in M$, otherwise we are done. Choose a smooth orthonormal local frame $\{e_1,e_2,\cdots,e_n\}$ about $x_0$ such that $\nabla_{e_i}e_j=0$ and $U_{ij}=\nabla_{ij}u+\chi^{ij}(x,u,\nabla u)$ is diagonal.
   Suppose
   $$U_{11}(x_0)\geq\cdots\geq U_{nn}(x_0),$$
   so $W(x_0)=U_{11}(x_0)e^{\phi(x_0)}$. We define a new function $\widetilde{W}=\log U_{11}+\phi$.
   Then at $x_0$,
   \begin{equation}\label{w1}
     0=\widetilde{W}_i=\frac{\nabla_iU_{11}}{U_{11}}+\nabla_i\phi,
   \end{equation}
   \begin{equation}\label{w2}
   0\geq\widetilde{W}_{ii}=\frac{U_{11}\nabla_{ii}U_{11}-(\nabla_iU_{11})^2}{U_{11}^2}+\nabla_{ii}\phi.
   \end{equation}
   By \eqref{req1} and \eqref{req4},
   $$(\nabla_iU_{11})^2\leq (\nabla_1U_{1i})^2+CU_{11}^2,$$
   \begin{equation}\label{11ii}
     \nabla_{ii}U_{11}\geq \nabla_{11}U_{ii}-\nabla_{11}\chi^{ii}+\nabla_{ii}\chi^{11}-CU_{11}.
   \end{equation}
   Differentiating the equation \eqref{1.1} twice, then we get
   \begin{equation}\label{11}
     G^{ij}\nabla_1U_{ij}+\sum_{l=0}^{k-2}\nabla_1\alpha_lG_l=\nabla_1\alpha_{k-1},
   \end{equation}
   and
   \begin{eqnarray}\label{12}
     \nonumber&&G^{ii}\nabla_{11}U_{ii}+G_k^{ij,rs}\nabla_1U_{ij}\nabla_1U_{rs}+\sum_{l=0}^{k-2}\nabla_{11}\alpha_lG_l\\
     &&+2\sum_{l=0}^{k-2}\nabla_1\alpha_lG_l^{ij}\nabla_1U_{ij}+\sum_{l=0}^{k-2}\alpha_lG_l^{ij,rs}\nabla_1U_{ij}\nabla_1U_{rs}=\nabla_{11}\alpha_{k-1}.
   \end{eqnarray}
   By \eqref{w1} and \eqref{11},
   \begin{eqnarray}\label{Gi}
     \nonumber G^{ii}(\nabla_{ii}\chi^{11}-\nabla_{11}\chi^{ii})
     \nonumber&\geq&U_{11}G^{ii}\chi_{p_s}^{ii}\nabla_s\phi-CU_{11}\sum_iG^{ii}-C\sum_{i\geq2}G^{ii}U_{ii}^2\\
      &&-\sum_{l=0}^{k-2}\chi_{p_s}^{11}\nabla_s\alpha_lG_l-U_{11}^2\sum_{i\geq2}G^{ii}\chi_{p_1p_1}^{ii}.
   \end{eqnarray}
   Therefore combined with \eqref{w2}, \eqref{11ii}, \eqref{12}, \eqref{Gi} and the definition of $\mathcal{L}$,
   \begin{eqnarray}\label{0g}
     \nonumber0&\geq&G^{ii}\nabla_{ii}\phi+\frac{1}{U_{11}}G^{ii}\nabla_{ii}U_{11}-\frac{1}{U_{11}^2}G^{ii}(\nabla_iU_{11})^2\\
     \nonumber&\geq&\mathcal{L}\phi+\frac{1}{U_{11}}G^{ii}\nabla_{11}U_{ii}-C\sum_iG^{ii}-\frac{C}{U_{11}}\sum_{i\geq2}G^{ii}U_{ii}^2\\
     \nonumber&&-\frac{1}{U_{11}}\sum_{l=0}^{k-2}\chi_{p_s}^{11}\nabla_s\alpha_lG_l-U_{11}\sum_{i\geq2}G^{ii}\chi_{p_1p_1}^{ii}
     -\frac{1}{U_{11}^2}G^{ii}(\nabla_iU_{11})^2\\
     \nonumber&=&\mathcal{L}\phi-\frac{1}{U_{11}}\sum_{l=0}^{k-2}\chi_{p_s}^{11}\nabla_s\alpha_lG_l-C\sum_iG^{ii}
     -\frac{C}{U_{11}}\sum_{i\geq2}G^{ii}U_{ii}^2\\
     \nonumber&&+\frac{1}{U_{11}}[\nabla_{11}\alpha_{k-1}-G_k^{ij,rs}\nabla_1U_{ij}\nabla_1U_{rs}-\sum_{l=0}^{k-2}\nabla_{11}\alpha_lG_l-2\sum_{l=0}^{k-2}\nabla_1\alpha_lG_l^{ij}\nabla_1U_{ij}\\
     &&-\sum_{l=0}^{k-2}\alpha_lG_l^{ij,rs}\nabla_1U_{ij}\nabla_1U_{rs}]-U_{11}\sum_{i\geq2}G^{ii}\chi_{p_1p_1}^{ii}
     -\frac{1}{U_{11}^2}G^{ii}(\nabla_iU_{11})^2.
   \end{eqnarray}
  Since $(-\frac{1}{G_l})^{\frac{1}{k-1-l}}$ is a concave operator for $l=0, \cdots, k-2$, we obtain
\begin{equation}\label{fra}
\begin{aligned}
&- \frac{1}{2}\sum ^{k-2} _ {l=0} \alpha_l G^{ij,rs}_l \nabla_1U_{ij}\nabla_1U_{rs} - 2 \sum^{k-2}_{l=0} \nabla_1\alpha_l G_{l}^{ii}\nabla_1U_{ii}\\
\geq & -\sum_{l=0}^{k-2}\frac{\alpha_l}{2} (1+\frac{1}{k-1-l}) G^{-1}_l G^{ij}_l G^{rs}_l\nabla_1U_{ij}\nabla_1U_{rs} - 2 \sum^{k-2}_{l=0} \nabla_1\alpha_l G_{l}^{ii}\nabla_1U_{ii}\\
= & -\sum_{l=0}^{k-2}\frac{k-l}{2(k-1-l)}\alpha_lG_l^{-1}[G^{ij}_l \nabla_1U_{ij} + \frac{2(k-l-1)\nabla_1\alpha_l}{(k-l)\alpha_l}G_l]^2
+ \sum_{l=0}^{k-2}\frac{2(k-l-1)}{k-l}\frac{(\nabla_1\alpha_l)^2}{\alpha_l}G_l\\
\geq & \sum_{l=0}^{k-2}\frac{2(k-l-1)}{k-l}\frac{(\nabla_1\alpha_l)^2}{\alpha_l}G_l.
\end{aligned}
\end{equation}
Hence by \eqref{1.3}, \eqref{0g} and \eqref{fra},
\begin{eqnarray}\label{rs1}
  \nonumber\mathcal{L}\phi&\leq&\frac{C}{U_{11}}\sum_iG^{ii}U_{ii}^2+C\sum_iG^{ii}+\frac{C}{U_{11}}
  +\frac{1}{U_{11}}\sum_{l=0}^{k-2}\chi_{p_s}^{11}\nabla_s\alpha_lG_l\\
 \nonumber &&+\frac{1}{U_{11}^2}G^{ii}(\nabla_iU_{11})^2-\frac{1}{U_{11}}[\sum_{l=0}^{k-2}\frac{2(k-l-1)}{k-l}\frac{(\nabla_1\alpha_l)^2}{\alpha_l}G_l
  -\sum_{l=0}^{k-2}\nabla_{11}\alpha_lG_l\\
  &&-G_k^{ij,rs}\nabla_1U_{ij}\nabla_1U_{rs}-\frac{1}{2}\sum_{l=0}^{k-2}\alpha_lG_l^{ij,rs}\nabla_1U_{ij}\nabla_1U_{rs}].
\end{eqnarray}
 In order to estimate the third derivative term, we follow the idea of \cite{Ur02}.
   Let $J=\{i:3U_{ii}\leq-U_{11}\}$, as the estimates in \cite{GB2133} and \cite{GBH2016}, we have
   $$\frac{1}{U_{11}}G_k^{ij,rs}\nabla_1U_{ij}\nabla_1U_{rs}+\frac{G_k^{ii}(\nabla_iU_{11})^2}{U_{11}^2}\leq\frac{1}{U_{11}^2}\sum_{i\in J}G_k^{ii}(\nabla_iU_{11})^2
   +\frac{1}{U_{11}^2}\sum_{i\notin J}G_k^{11}(\nabla_iU_{11})^2+C\sum_{i\notin J}G_k^{ii},$$
 and similarly
 $$\frac{\alpha_l}{2U_{11}}G_l^{ij,rs}\nabla_1U_{ij}\nabla_1U_{rs}+\frac{\alpha_lG_l^{ii}(\nabla_iU_{11})^2}{U_{11}^2}\leq\frac{\alpha_l}{U_{11}^2}\sum_{i\in J}G_l^{ii}(\nabla_iU_{11})^2
   +\frac{\alpha_l}{U_{11}^2}\sum_{i\notin J}G_l^{11}(\nabla_iU_{11})^2+C\alpha_l\sum_{i\notin J}G_l^{ii}.$$
   Then combined with \eqref{w1}, we get
   \begin{eqnarray}\label{non}
     \nonumber&& \frac{G^{ii}(\nabla_iU_{11})^2}{U_{11}^2}+\frac{1}{U_{11}}G_k^{ij,rs}\nabla_1U_{ij}\nabla_1U_{rs}
     +\frac{1}{2U_{11}}\sum_{l=0}^{k-2}\alpha_lG_l^{ij,rs}\nabla_1U_{ij}\nabla_1U_{rs} \\
    \nonumber &\leq&\frac{1}{U_{11}^2}\sum_{i\in J}G^{ii}(\nabla_iU_{11})^2+\frac{1}{U_{11}^2}\sum_{i\notin J}G^{11}(\nabla_iU_{11})^2+C\sum_{i\notin J}G^{ii}\\
    \nonumber &\leq&\sum_{i\in J}G^{ii}\nabla_i\phi^2+G^{11}|\nabla\phi|^2+C\sum_{i\notin J}G^{ii}\\
     &\leq&Cb^2\sum_{i\in J}G^{ii}+Ca^2\sum_iG^{ii}U_{ii}^2+C\sum_iG^{ii}+CG^{11}(a^2U_{11}^2+b^2).
   \end{eqnarray}
   Recall that $\phi=\frac{a}{2}|\nabla u|^2+b(\underline{u}-u)$, then
   \begin{eqnarray}\label{mat}
    \nonumber \mathcal{L}\phi &=& G^{ij}\nabla_{ij}\phi+G^{ij}\chi_{p_s}^{ij}\nabla_s\phi \\
    \nonumber &\geq& \frac{a}{2}G^{ii}U_{ii}^2-Ca\sum_iG^{ii}+aG^{ii}\nabla_su\nabla_{sii}u+bG^{ii}\nabla_{ii}(\underline{u}-u)\\
     \nonumber&&+aG^{ij}\nabla_{p_s}\chi^{ij}\nabla_suU_{ss}-aG^{ii}\nabla_su\chi^{si}+bG^{ij}\nabla_{p_s}\chi^{ij}\nabla_s(\underline{u}-u)\\
     &\geq&b\mathcal{L}(\underline{u}-u)+\frac{a}{2}G^{ii}U_{ii}^2-(Ca+C)\sum_iG^{ii}-C-a\nabla_su\sum_{l=0}^{k-2}\nabla_s\alpha_lG_l.
   \end{eqnarray}
   By \eqref{rs1}-\eqref{mat}, we obtain
   \begin{eqnarray}\label{L1}
    \nonumber b\mathcal{L}(\underline{u}-u) &\leq& (\frac{C}{U_{11}}+Ca^2-\frac{a}{2})\sum_iG^{ii}U_{ii}^2+Cb^2\sum_{i\in J}G^{ii}\\
    \nonumber &&+CG^{11}(a^2U_{11}^2+b^2)+(Ca+C)\sum_iG^{ii}+C\\
    \nonumber &&+\frac{C}{U_{11}}+\frac{1}{U_{11}}\sum_{l=0}^{k-2}\chi_{p_s}^{11}\nabla_s\alpha_lG_l+a\nabla_su\sum_{l=0}^{k-2}\nabla_s\alpha_lG_l\\
     &&-\frac{1}{U_{11}}[\sum_{l=0}^{k-2}\frac{2(k-l-1)}{k-l}
   \frac{(\nabla_1\alpha_l)^2}{\alpha_l}G_l-\sum_{l=0}^{k-2}\nabla_{11}\alpha_lG_l].
   \end{eqnarray}
   Then we will study the term
   $$\frac{C}{U_{11}}+\frac{1}{U_{11}}\sum_{l=0}^{k-2}\chi_{p_s}^{11}\nabla_s\alpha_lG_l+a\nabla_su\sum_{l=0}^{k-2}\nabla_s\alpha_lG_l-\frac{1}{U_{11}}[\sum_{l=0}^{k-2}\frac{2(k-l-1)}{k-l}
   \frac{(\nabla_1\alpha_l)^2}{\alpha_l}G_l-\sum_{l=0}^{k-2}\nabla_{11}\alpha_lG_l].$$

   When $\frac{\sigma_l}{\sigma_{k-1}}\leq N$, $l=0,\cdots,k-2$,
   \begin{eqnarray}\label{b2}
    \nonumber && \sum_iG^{ii}+\frac{1}{U_{11}}[\sum_{l=0}^{k-2}\frac{2(k-l-1)}{k-l}
   \frac{(\nabla_1\alpha_l)^2}{\alpha_l}G_l-\sum_{l=0}^{k-2}\nabla_{11}\alpha_lG_l]\\
   \nonumber&&-\frac{1}{U_{11}}\sum_{l=0}^{k-2}\chi_{p_s}^{11}\nabla_s\alpha_lG_l
   -\frac{C}{U_{11}}-a\nabla_su\sum_{l=0}^{k-2}\nabla_s\alpha_lG_l\\
     &\geq& \frac{n-k+1}{k}-\left(\frac{1}{U_{11}\inf\alpha_l}+a\right)CN\geq 0,
   \end{eqnarray}
   by choosing $U_{11}$ large enough and $a$ small enough.

When $\frac{\sigma_l}{\sigma_{k-1}}> N$, combined with \eqref{i1}, we have
   \begin{eqnarray}\label{b3}
    && \sum_iG^{ii}+\frac{1}{U_{11}}[\sum_{l=0}^{k-2}\frac{2(k-l-1)}{k-l}
   \frac{(\nabla_1\alpha_l)^2}{\alpha_l}G_l-\sum_{l=0}^{k-2}\nabla_{11}\alpha_lG_l]\\
   \nonumber&&-\frac{1}{U_{11}}\sum_{l=0}^{k-2}\chi_{p_s}^{11}\nabla_s\alpha_lG_l
   -\frac{C}{U_{11}}-a\nabla_su\sum_{l=0}^{k-2}\nabla_s\alpha_lG_l\\
    \nonumber &\geq& \frac{n-k+1}{k}+\frac{1}{U_{11}}[\sum_{l=0}^{k-2}\frac{2(k-l-1)}{k-l}
   \frac{(\nabla_1\alpha_l)^2}{\alpha_l}G_l-\sum_{l=0}^{k-2}\nabla_{11}\alpha_lG_l]\\
   \nonumber&&+\sum_{l=0}^{k-2}C(n,k,l)\alpha_l(x)\frac{\sigma_l\sigma_{k-2}}{\sigma_{k-1}^2}-\frac{1}{U_{11}}\sum_{l=0}^{k-2}\chi_{p_s}^{11}\nabla_s\alpha_lG_l
   -\frac{C}{U_{11}}-a\nabla_su\sum_{l=0}^{k-2}\nabla_s\alpha_lG_l\\
   \nonumber&\geq&\sum_{l=0}^{k-2}[C(n,k,l)\inf\alpha_l(x)N^{\frac{1}{k-1-l}}-(\frac{1}{U_{11}\inf\alpha_l}+a)C]\frac{\sigma_l}{\sigma_{k-1}}
    +\frac{n-k+1}{k}-\frac{C}{U_{11}}\geq0,
   \end{eqnarray}
   by choosing $U_{11}$ large enough and $a$ small enough.

   Inserting \eqref{b2} and \eqref{b3} into \eqref{L1}, then
   \begin{eqnarray}\label{un}
    \nonumber b\mathcal{L}(\underline{u}-u) &\leq& (\frac{C}{U_{11}}+Ca^2-\frac{a}{2})\sum_iG^{ii}U_{ii}^2+Cb^2\sum_{i\in J}G^{ii}\\
    &&+CG^{11}(a^2U_{11}^2+b^2)+(Ca+C+1)\sum_iG^{ii}+C.
   \end{eqnarray}
   In order to deal with \eqref{un}, we can also consider the two cases: $|\nu_{\mu}-\nu_{\lambda}|\geq\beta$ and $|\nu_{\mu}-\nu_{\lambda}|<\beta$.

   When $|\nu_{\mu}-\nu_{\lambda}|\geq\beta$, by Condition \ref{cond-cxw-01}, we have
   \begin{eqnarray*}
     \chi_{p_k}^{ii}\nabla_k(\underline{u}-u) &\geq&\chi^{ii}(x,u,\nabla\underline{u})-\chi^{ii}(x,u,\nabla u) \\
     &\geq& \chi^{ii}(x,\underline{u},\nabla\underline{u})-\chi^{ii}(x,u,\nabla u),
   \end{eqnarray*}
   then
  $$\mathcal{L}(\underline{u}-u)\geq G^{ii}(\underline{U}_{ii}-U_{ii})\geq \varepsilon(1+\sum_iG^{ii}).$$
   By \eqref{un}, when $b$ large enough, we can obtain
   $$\left(\frac{C}{U_{11}}+Ca^2-\frac{a}{2}\right)\sum_iG^{ii}U_{ii}^2+Cb^2\sum_{i\in J}G^{ii}+CG^{11}(a^2U_{11}^2+b^2)\geq 0,$$
   which implies $U_{11}(x_0)\leq C$. Otherwise the first term will be negative when $a$ small enough, and $|U_{ii}|\geq\frac{1}{3}U_{11}$ for $i\in J$.

   When $|\nu_{\mu}-\nu_{\lambda}|<\beta$, then $\nu_{\lambda}-\beta\mathbf{1}\in\Gamma_n$ and therefore
   $$G^{ii}\geq\frac{\beta}{\sqrt{n}}\sum_kG^{kk},\quad \forall~1\leq i\leq n.$$
   By \eqref{1.3}-\eqref{cond-cxw-03} and concavity of the operator $G$, we get $\mathcal{L}(\underline{u}-u)\geq0$. Denote $|\lambda|^2=\sum_i\lambda_i^2=\sum_iU_{ii}^2$, by \eqref{un} we derive
   \begin{equation}\label{star}
   \frac{\beta}{\sqrt{n}}|\lambda|^2\sum_iG^{ii}\leq\sum_iG^{ii}U_{ii}^2\leq C(1+\sum_iG^{ii}),
   \end{equation}
   by choosing $U_{11}$ large enough and $a$ small enough.

   Combined with \eqref{i1} and \eqref{star}, we have
   $$\frac{\beta}{\sqrt{n}}|\lambda|^2\sum_iG^{ii}\leq \frac{C(n+1)}{n-k+1}\sum_iG^{ii},$$
   which implies $|\lambda|\leq C$ and the proof is completed.
\end{proof}

\subsection{Second order derivatives boundary estimates}

For any fixed $x_0\in\partial M$, we can choose smooth orthonormal local frames $e_1, \cdots,e_n$ around
$x_0$ such that when restricted on $\partial M$, $e_n$ is normal to $\partial M$. For $x\in\overline{M}$,
let $\rho(x)$ and $d(x)$ denote the distances from $x$ to $x_0$ and $\partial M$ respectively, and set $M_{\delta}=\{x\in M:\rho(x)<\delta\}$.
We may assume $\rho$ and $d$ are smooth in $M_{\delta}$ by taking $\delta$ small.
Then we get the following important lemma, which plays a key role in
our boundary estimates.
\begin{lemma}\label{lem3.4}
Let $v=u-\underline{u}+td-\frac{N}{2}d^2$, $\mathcal{L}$ is defined as in \eqref{ll}, then for a positive constant $\epsilon$, there exist some uniform positive constants $t,\delta$ sufficiently small and $N$ sufficiently large such that
\begin{equation}\label{eps}
\begin{cases}
\mathcal{L}v\le -\epsilon(1+\sum_{i=1}^{n}G^{ii}),&\text{in}~M_\delta, \\
v\ge 0,&\text{on}~\partial M_{\delta}.
\end{cases}
\end{equation}
\end{lemma}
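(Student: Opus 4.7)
The plan is to decompose $v$ into its three summands and estimate $\cL$ applied to each, then choose the constants $t, N, \delta$ in the appropriate order. Using \eqref{ll} together with $\nabla_{ij}(d^2)=2d\nabla_{ij}d+2\nabla_i d\,\nabla_j d$, one obtains
\[
\cL v \;=\; \cL(u-\ul u) \;+\; t\,\cL d \;-\; Nd\,\cL d \;-\; N\, G^{ij}\nabla_i d\,\nabla_j d.
\]
Because $\nabla d$ and $\nabla^2 d$ are smooth and bounded on a tubular neighborhood of $\partial M$ and $|\chi^{ij}_{p_s}|$ is uniformly bounded, the two middle terms contribute at most $C(t+N\delta)(1+\sum_i G^{ii})$ on $M_\delta$. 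For $\cL(u-\ul u)$, Conditions \eqref{1.3}--\eqref{1.4} combined with $u\ge\ul u$ and concavity of $G$ (Lemma~\ref{lem2.5}) produce, exactly as in the proofs of Theorems~\ref{C1} and~\ref{theorem 3.1},
\[
\cL(\ul u - u)\;\ge\; G^{ij}(\ul U_{ij}-U_{ij})\;\ge\; G(\ul U)-G(U)\;\ge\;0,
\]
the last inequality coming from the subsolution hypothesis \eqref{cond-cxw-03}; in particular $\cL(u-\ul u)\le 0$.

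To upgrade this to a strictly negative bound I would split into the same dichotomy used for the $C^1$ and interior $C^2$ proofs. Write $\lambda=\lambda(U)$, $\mu=\lambda(\ul U)$. In the \emph{non-tangential} case $|\nu_\mu-\nu_\lambda|\ge\beta$, Lemma~2.1 of \cite{GBH2016} sharpens the inequality above to $\cL(u-\ul u)\le -\varepsilon_0(1+\sum_i G^{ii})$; discarding the nonpositive quadratic term then yields $\cL v\le -(\varepsilon_0-Ct-CN\delta)(1+\sum_i G^{ii})$, which is of the desired form as soon as $t$ and $N\delta$ are small. In the \emph{tangential} case $|\nu_\mu-\nu_\lambda|<\beta$ we only know $\cL(u-\ul u)\le 0$, but now $G^{ii}\ge\tfrac{\beta}{\sqrt n}\sum_k G^{kk}$ for every $i$; since $|\nabla d|=1$ inside the tubular neighborhood, this forces $G^{ij}\nabla_i d\,\nabla_j d\ge \tfrac{\beta}{\sqrt n}\sum_i G^{ii}$, whence
\[
\cL v \;\le\; C(t+N\delta)\bigl(1+\textstyle\sum_i G^{ii}\bigr)\;-\;\tfrac{N\beta}{\sqrt n}\textstyle\sum_i G^{ii}.
\]
The lower bound $\sum_i G^{ii}\ge (n-k+1)/k$ from \eqref{i1} absorbs the free constant, and the required estimate then follows by fixing $N$ large, then $t$ and $\delta$ small with $N\delta$ small.

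For the boundary condition, on $\partial M\cap\overline{M_\delta}$ one has $u=\ul u=\varphi$ and $d=0$, so $v=0$ there. On the inner portion $\{\rho=\delta\}\cap\overline M$ of $\partial M_\delta$ the estimate $d\le\rho=\delta$ and the factorization $td-\tfrac{N}{2}d^2=d(t-\tfrac{N}{2}d)$ give nonnegativity of the polynomial part as soon as $\delta\le 2t/N$; combined with $u\ge\ul u$, which is available from the comparison principle since $G$ is elliptic and concave on $\Gamma_{k-1}$ and $\chi^{\xi\xi}_z\ge 0$, this yields $v\ge 0$ on $\partial M_\delta$. The main obstacle is reconciling the competing smallness/largeness requirements on $t,N,\delta$ simultaneously: one first fixes $t$ small enough to control Case 1 and the $t\,\cL d$ term, then chooses $N$ large enough to dominate in Case 2, and finally takes $\delta$ sufficiently small that both $N\delta$ is tiny and $\delta\le 2t/N$. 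The order of quantifiers matters, but the dependencies remain self-consistent.
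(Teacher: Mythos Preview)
Your argument is correct and follows exactly the route the paper indicates: the paper's own proof merely refers to Lemma~4.3 of \cite{GZ2019v2}, notes that the extra first-order piece $G^{ij}\chi^{ij}_{p_s}\nabla_s$ in $\cL$ is harmless, and says one splits into the two cases $|\nu_\mu-\nu_\lambda|\ge\beta$ and $|\nu_\mu-\nu_\lambda|<\beta$ before choosing $N$ large and $t,\delta$ small. You have filled in precisely that sketch, including the correct order of fixing $t$, then $N$, then $\delta$, and the boundary verification via $d\le\rho\le\delta\le 2t/N$ together with $u\ge\ul u$.
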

\begin{proof}
The proof is similar to lemma 4.3 in \cite{GZ2019v2}. Although the operator $\mathcal{L}$ is more complex than $G^{ij}$, it will make no difference since the extra term can be controlled. We can also consider $|\nu_{\mu}-\nu_{\lambda}|\geq\beta$ and $|\nu_{\mu}-\nu_{\lambda}|<\beta$ the two cases to derive \eqref{eps}
by choosing $N$ large enough and $t,\delta$ small enough.

\end{proof}

\begin{proof}[\textbf{Proof of Theorem 1.2}]
By Theorem \ref{theorem 3.1}, we only need to derive boundary estimates.

$\mathbf{Case~1:} $  Estimates of $\nabla_{\alpha\beta}u, \alpha, \beta=1,\cdots,n-1$ on $\partial M$.

Since $u-\underline{u}=0$ on $\partial M$, therefore,
$$\nabla_{\alpha\beta}(u-\underline{u})=-\nabla_{n}(u-\underline{u})B_{\alpha\beta}, \quad \mbox{on} ~\partial M,$$
where $B_{\alpha\beta} = \langle\nabla_{\alpha}e_{\beta},e_n\rangle$ denotes the second fundamental form of $\partial M$. Therefore,
\begin{eqnarray}\label{alp}
|\nabla_{\alpha\beta}u| \leq C, \quad \mbox{on} ~\partial M.
\end{eqnarray}

$\mathbf{Case~2:}$ Estimates of $ \nabla_{\alpha n}u$, $\alpha=1,\cdots,n-1$ on $\partial M$.

Consider the following barrier function
\begin{equation}\label{3.40}
	\Psi=A_1v+A_2\rho^2-A_3\underset{\beta<n}{\sum}\abs {\nabla_\beta (u-\varphi)}^2.
\end{equation}
Combined with lemma \ref{lem3.4}, we claim that
\begin{equation}\label{3.50}
\begin{cases}
\mathcal{L}(\Psi \pm \nabla_{\alpha}(u-\varphi))\le 0,&\text{in }M_\delta,\\
\Psi \pm \nabla_{\alpha}(u-\varphi)\ge 0,&\text{on }\partial M_\delta,
\end{cases}
\end{equation}
for suitable chosen positive constants $A_1, A_2, A_3$ and $\mathcal{L}, v$ are defined in lemma \ref{lem3.4}.
By \eqref{req0}, \eqref{ku} and \eqref{11}, we get
\begin{eqnarray*}
  \nonumber G^{ij}\nabla_{ij}(\nabla_{\beta}(u-\varphi))&\leq&-\sum_{l=0}^{k-2}\nabla_{\beta}\alpha_lG_l+C\sum_iG^{ii}|\lambda_i|+C\sum_iG^{ii}+C\\
  &\leq&C\sum_{l=0}^{k-2}\frac{\sigma_l}{\sigma_{k-1}}+C(1+\sum_iG^{ii}+\sum_iG^{ii}|\lambda_i|),
\end{eqnarray*}
where $\lambda=(\lambda_1,\cdots,\lambda_n)$ are the eigenvalues of $\{U_{ij}\}$.
Since
\begin{eqnarray*}
  \sum_iG^{ii}|\lambda_i|&\geq& \sum_i\left[\nabla_{ii}\left(\frac{\sigma_k}{\sigma_{k-1}}\right)U_{ii}-\sum_{l=0}^{k-2}\alpha_l\nabla_{ii}\left(\frac{\sigma_l}{\sigma_{k-1}}\right)
  U_{ii}\right]\\
  &\geq&\alpha_{k-1}+\varepsilon_0\sum_{l=0}^{k-2}\frac{\sigma_l}{\sigma_{k-1}},
\end{eqnarray*}
where $\varepsilon_0$ is a positive constant depending on $\inf\alpha_l$ for $0\leq l\leq k-2$.
We obtain
$$G^{ij}\nabla_{ij}(\nabla_{\beta}(u-\varphi))\leq C(1+\sum_iG^{ii}+\sum_iG^{ii}|\lambda_i|), \quad \mbox{in}~M_{\delta}.$$
Then the key is to derive
$$\mathcal{L}\Psi\leq -K(1+\sum_iG^{ii}+\sum_iG^{ii}|\lambda_i|),\quad \mbox{in}~M_{\delta}$$
for any positive constant $K$ and it is same as lemma 5.2 in \cite{GBH2015}. We can choose $A_1\gg A_2\gg A_3 \gg 1$ to get \eqref{3.50}, more details see \cite{GBH2015} and \cite{GBH2016}.

By the maximum principle, we have
\[
\Psi\pm\nabla_{\alpha}(u-\varphi)\geq 0,\quad \mbox{in}~M_\delta,
\]
and therefore
\begin{equation}\label{3.51}
|\nabla_{n\alpha}u|\leq\nabla_n\Psi+|\nabla_{n\alpha}\varphi|\leq C, \quad \mbox{on}~\partial M.
\end{equation}

$\mathbf{Case~3:}$ Estimates of $\nabla_{nn}u$ on $\partial M$.

We only need to show the uniform upper bound
$$\nabla_{nn}u(x)\leq C, \quad \forall~x\in\partial M,$$
since $\Gamma_{k-1}\subset\Gamma_1$ and the lower bound for $\nabla_{nn} u$ follows from the estimates of $\nabla_{\alpha \beta} u$ and $\nabla_{\alpha n} u$.

According to the main idea in \cite{GB2133}, which was originally due to Trudinger \cite{T95}, we will show that there
are uniform constants $c_0,\,R_0$ such that $(\lambda'(U_{\alpha\beta}(x)),R)\in \Gamma_{k-1}$ and
\begin{align}\label{3.53}
G(\lambda'(U_{\alpha\beta}(x)),R)\ge\alpha_{k-1}(x)+c_0,
\end{align}
for all $R>R_0$ and $x\in\partial M$. Here $\lambda'(U_{\alpha\beta})=(\lambda'_1,\dots,\lambda'_{n-1})$ denotes the eigenvalues of the
$(n-1)\times(n-1)$ matrix $\{U_{\alpha\beta}\}(1\le\alpha,\beta\le n-1)$. Suppose that we have found such $c_0$ and
$R_0$, by Lemma 1.2 in \cite{CNS85}, it follows from estimates \eqref{alp} and \eqref{3.51} that we can find $R_1\ge R_0$
such that, if $U_{nn}(x)>R_1$, then
\begin{align*}
G(U(x))\geq G(\lambda'(U_{\alpha\beta}(x)),U_{nn}(x))-\frac{c_0}{2}\geq\alpha_{k-1}(x)+\frac{c_0}{2},
\end{align*}
which contradicts to $G(U(x))= \alpha_{k-1}(x)$. Thus $U_{nn}(x)\leq R_1$.

In order to obtain the claim \eqref{3.53},  we only need to show that
\begin{align*}
\tilde{m}:=\underset{x\in\partial M}{\min}\biggl( \lim_{R\to+\infty}G(\lambda'(U_{\alpha\beta}(x)),R)-\alpha_{k-1}(x)\biggr) \geq c_0.
\end{align*}

Define
\begin{align*}
\tilde{c}=\underset{x\in\partial M}{\min}\biggl( \lim_{R\to+\infty}G(\lambda'(\ul U_{\alpha\beta}(x)),R)-G(\ul U(x))\biggr)>0,
\end{align*}
and
\begin{align*}
\tilde{F}[r_{\alpha\beta}]=\lim_{R\to+\infty}G(\lambda'([r_{\alpha\beta}]),R),
\end{align*}
for a symmetric $(n-1)\times (n-1)$ matrix $[r_{\alpha \beta}]$ with $(\lambda'([r_{\alpha\beta}]),R)\in \Gamma_{k-1}$.

Suppose that $\tilde{m}$ is achieved at a point $x_0\in\partial M$. Choose a local orthonormal frame around $x_0$ such that $U_{\alpha\beta}(x_0) (1\leq\alpha, \beta\leq n-1)$ is diagonal, $e_n$ is normal to $\partial M$ and assume $\nabla_{nn}u(x_0)\ge\nabla_{nn}\ul u(x_0)$.
Using the concavity, we know that
\begin{equation*}
\tilde{F}_0^{\alpha\beta}(U_{\alpha\beta}(x)-U_{\alpha\beta}(x_0))\ge \tilde{F}[U_{\alpha\beta}(x)]-\tilde{F}[U_{\alpha\beta}(x_0)]-\sum_{l=0}^{k-2} (\alpha_{k-1}(x)-\alpha_{k-1}(x_0)) \frac{\sigma_{l-1}}{\sigma_{k-2}}(U_{\alpha \beta}(x)),
\end{equation*}
where $\tilde{F}_0^{\alpha\beta}=\frac{\partial\tilde{F}}{\partial r_{\alpha\beta}}(U_{\alpha\beta}(x_0))$.
In particular, this implies,
\begin{equation}\label{3.55}
\begin{split}
&\quad\tilde{F}_0^{\alpha\beta}U_{\alpha\beta}(x)-\alpha_{k-1}(x)-\tilde{F}_0^{\alpha\beta}U_{\alpha\beta}(x_0)+\alpha_{k-1}(x_0)\ge -\overline{C}\mbox{dist}(x,x_0),\quad \mbox{on}~\partial M,
\end{split}
\end{equation}
for a constant $\overline{C}$ depending on $\|\alpha_l\|_{C^1}$ and $\|U_{\alpha \beta}\|_{L^{\infty}}$.
Hence
\begin{align}\label{3.56}
U_{\alpha\beta}=\ul U_{\alpha\beta}-\nabla_n(u-\ul u)B_{\alpha\beta}+\chi^{\alpha\beta}[u]-\chi^{\alpha\beta}[\ul u],\quad \mbox{on}~ \partial M,
\end{align}
where $\chi[u]:=\chi(x, u, \nabla u)$ and $\chi[\ul{u}]:=\chi(x, \ul u, \nabla \ul u)$.
Without loss of generality, we assume $\tilde{m}<\tilde{c}/2$. By \eqref{3.56}, we have at $x_0$,
\begin{equation}\label{3.57}
\begin{split}
\nabla_n(u-\ul u)(x_0)\tilde{F}_0^{\alpha\beta}B_{\alpha\beta} (x_0) \ge \frac{\tilde{c}}{2}+H[u(x_0)]-H[\ul u(x_0)],
\end{split}
\end{equation}
where $H[u]=\tilde{F}_0^{\alpha\beta}\chi^{\alpha\beta}[u]-\alpha_{k-1}(x)$.
Define
$$\Phi=-\eta\nabla_n(u-\ul u)+H[u]+Q,$$
with $\eta=\tilde{F}_0^{\alpha\beta}B_{\alpha\beta}(x)$ and
$$Q=\tilde{F}_0^{\alpha\beta}\ul u_{\alpha\beta}(x)-\tilde{F}_0^{\alpha\beta}U_{\alpha\beta}(x_0)+\alpha_{k-1}(x_0)+\overline{C}\mbox{dist}(x,x_0).$$
From \eqref{3.55} and \eqref{3.56} we see that $\Phi(x_0)=0$ and $\Phi \ge 0$ on $\partial M$ near $x_0$.\\
Note that
\begin{align*}
\abs{\mathcal{L}\nabla_k(u-\ul{u})}\le C(1+\sum_i G^{ii}+\sum_iG^{ii}\abs{\lambda_i}),
\end{align*}
and by \eqref{1.3}, we have
\begin{equation*}
\begin{split}
\mathcal{L}H&\le H_z[u]\mathcal{L}u+H_{p_k}[u]\mathcal{L} \nabla_k u+G^{ij}H_{p_kp_l}[u]\nabla_{ki} u \nabla_{lj} u+C(1+\sum_i G^{ii}+\sum_i G^{ii}\abs{\lambda_i})\\
&\le C(1+\sum_i G^{ii}+\sum_i G^{ii}\abs{\lambda_i}).
\end{split}
\end{equation*}
Therefore,
\begin{align*}
\mathcal{L}\Phi \le C(1+\sum_i G^{ii}+\sum_i G^{ii}\abs{\lambda_i}).
\end{align*}
Consider the function $\Psi$ defined in \eqref{3.40}, then for $A_1\gg A_2\gg A_3\gg 1$
\begin{equation*}
\begin{cases}
\mathcal{L}(\Psi + \Phi)\le 0,&\text{in }M_\delta,\\
\Psi + \Phi\ge 0,&\text{on }\partial M_\delta.
\end{cases}
\end{equation*}
By the maximum principle, $\Psi+\Phi\ge0$ in $M_\delta$. Thus
$$\nabla_n\Phi(x_0)\ge-\nabla_n\Psi(x_0)\ge -C.$$
Let $u^t=tu+(1-t)\ul u$, then we have
\begin{equation*}
\begin{split}
H[u]-H[\ul u]=(u-\ul u)\int_{0}^{1}H_z[u^t]dt+\sum_k \nabla_k (u-\ul u)\int_{0}^{1}H_{p_k}[u^t]dt.
\end{split}
\end{equation*}
Therefore,
\begin{align}\label{3.61}
H[u](x_0)-H[\ul u](x_0)=\nabla_n(u-\ul u)(x_0)\int_{0}^{1}H_{p_n}[u^t](x_0)dt,
\end{align}
and
\begin{equation*}
\nabla_nH[u](x_0)\le \nabla_{nn}(u-\ul u) (x_0 )\int_{0}^{1}H_{p_n}[u^t](x_0)dt+C,
\end{equation*}
since $H_{p_np_n}\le 0$, $\nabla_{nn}(u-\ul u)(x_0)\ge 0$ and $\nabla_n(u-\ul u)(x_0)\ge 0$. It follows that
\begin{eqnarray*}
\nabla_n\Phi (x_0)&\leq& -\eta(x_0)\nabla_{nn}(u-\ul u)(x_0)+\nabla_nH[u](x_0)+C\\
&\leq& \biggl(-\eta(x_0)+\int_{0}^{1}H_{p_n}[u^t](x_0)dt\biggr)\nabla_{nn}u(x_0)+C.
\end{eqnarray*}
By \eqref{3.57} and \eqref{3.61},
\begin{align*}
\eta(x_0)-\int_{0}^{1}H_{p_n}[u^t](x_0)dt\ge\frac{\tilde{c}}{2\nabla_n(u-\ul u)(x_0)}\ge \epsilon_1\tilde{c}>0
\end{align*}
for some uniform constant $\epsilon_1>0$. This gives
\begin{align*}
\nabla_{nn}u(x_0)\le \frac{C}{\epsilon_1\tilde{c}}.
\end{align*}
Combined with  \eqref{alp} and \eqref{3.51} we know all eigenvalues of $U(x_0)$ have a priori bound,
which implies that eigenvalues of $U(x_0)$ are contained in $\Gamma_{k-1}\cap M_{\delta}$.
 On the other hand,
if the eigenvalues can not touch $\partial\Gamma_{k-1}$, then for $R>0$ large enough,
\[
\tilde{m}=G(\lambda'(U_{\alpha\beta}(x_0)),R)-\alpha_{k-1}(x_0)>0.
\]
So we need to show that $\lambda(U(x_0))$ can not touch $\partial\Gamma_{k-1}$.
Recall our equation
\[
G(U)= \frac{\sigma_k(U)}{\sigma_{k-1}(U)} - \sum_{l=0}^{k-2} \alpha_l(x)\frac{\sigma_l(U)}{\sigma_{k-1}(U)}=\alpha_{k-1}(x).
\]
For $\lambda\in\Gamma_{k-1}$, we have $\sigma_k(\lambda)\sigma_{k-2}(\lambda)\le c(n,k)\sigma_{k-1}^2(\lambda)$, which implies
\[
\frac{\sigma_k(\lambda)}{\sigma_{k-1}(\lambda)}\le c(n,k)\frac{\sigma_{k-1}(\lambda)}{\sigma_{k-2}(\lambda)}
\le \tilde{c}(n,k)\frac{\sigma_{k-1}(\lambda)}{\sigma_{k-1}^{\frac{k-2}{k-1}}(\lambda)}=\tilde{c}(n,k)\sigma_{k-1}^{\frac{1}{k-1}}(\lambda).
\]
Then,
\begin{equation}\label{eqde}
\frac{\sigma_k(\lambda)}{\sigma_{k-1}(\lambda)}\le 0,\quad\text{as}\quad\lambda\to\partial\Gamma_{k-1}.
\end{equation}
By the non-degeneracy assumption ($\alpha_l(x_0)>0, 0\leq l\leq k-2$), if $\lambda(U(x_0))\to\partial\Gamma_{k-1}$, $G(U(x_0))\to-\infty$.
This contradicts with the condition that $\alpha_{k-1}\in C^{2}(\ol M)$.
\end{proof}

\section{The Dirichlet problem}
We now turn to the existence of solutions for the Dirichlet problem \eqref{1.1}. We consider the special case $\chi=\chi(x,p)$.

\begin{proof}[\textbf{Proof of Theorem 1.4}]
By Theorem \ref{C1} and \ref{theorem 3.1}, we obtain
\begin{equation}\label{eqrer}
\|u\|_{C^2(\overline{M})} \leq C.
\end{equation}
From \eqref{eqde}, we see that the equation \eqref{1.1} becomes uniformly
elliptic for admissible solutions satisfying \eqref{eqrer}. Applying Evans-Krylov theorem and Schauder theory, we can obtain the $C^{2, \alpha}$ and higher order estimates for the  admissible solutions of the equation \eqref{1.1}. Theorem \ref{main01}
may be proved by using the standard continuity method.
\end{proof}


\begin{thebibliography}{50}


\bibitem{CNS84}
L. Caffarelli, L. Nirenberg, J. Spruck, Dirichlet problem for nonlinear second order
elliptic equations I, Monge-Amp\`{e}re equations, Comm. Pure Appl. Math., 37(1984), 369-402.

\bibitem{CNS85}
L. Caffarelli, L. Nirenberg, J. Spruck, Dirichlet problem for nonlinear second order
elliptic equations III, Functions of the eigenvalues of the Hessian, Acta Math.,
155(1985), 261-301.

\bibitem{Co17} T. Collins, G. Sz\'ekelyhidi, Convergence of the
$J$-flow on toric manifolds, J. Differ. Geom., 107(2017), no. 1, 47-81.



\bibitem{CCX19}
C.Q. Chen, L. Chen, X.Q. Mei, N. Xiang, The Classical Neumann Problem for a class of mixed Hessian equations,
Studies in Applied Mathematics, 2021.

\bibitem{CCX21} C.Q. Chen, L. Chen, X.Q. Mei, N. Xiang, The Neumann problem for a class of mixed complex Hessian equations,
arXiv:2003.06147, 2020.



\bibitem{Chen19} L. Chen, X. Guo, Y. He,
A class of fully nonlinear equations arising in conformal geometry,
Int. Math. Res. Not., 5(2022), 3651-3676.

\bibitem{Chen20} L. Chen, A.G. Shang, Q. Tu, A class of prescribed Weingarten curvature equations in Euclidean space,
Comm. Partial Differential Equations, 46(2021), no. 7, 1326-1343.

\bibitem{Chen212}
X.J. Chen, W. Lu, Q. Tu, N. Xiang,
Pogorelov estimates for a class of fully nonlinear equations,
Nonlinear Analysis, 212(2021).

\bibitem{Chen00}
X.X. Chen, On the lower bound of the Mabuchi energy and its application, Int. Math. Res. Not., 12(2000), 607-623.




\bibitem{GB2014}
B. Guan, Second order estimates and regularity for fully nonlinear ellitpic equations on Riemannian
manifolds, Duke Math. J., 163(2014), 1491-1524.


\bibitem{GB2133}
B. Guan, The Dirichlet problem for fully nonlinear elliptic equations on Riemannian manifolds,
arXiv:1403.2133v2, 2014.


\bibitem{GBH2015}
B. Guan, H.M. Jiao, Second order estimates for Hessian type fully nonlinear elliptic equations on
Riemannian manifolds, Calc.Var., 54(2015), 2693-2712.


\bibitem{GBH2016}
B. Guan, H.M. Jiao, The Dirichlet problem for Hessian type elliptic equations on Riemannian
manifolds, Discrete And Continuous Dynamical Systems., 36(2016), 701-714.






\bibitem{GL1994}
P.F. Guan, Y.Y. Li,
 On Weyl problem with nonnegative Gauss curvature, J. Differ. Geom., 39(1994), 331-342.



\bibitem{GL1997}
P.F. Guan, Y.Y. Li,
$C^{1,1}$ Regularity for solutions of a problem of Alexandrov, Comm. Pure Appl.
Math., 50(1997), 789-811.

\bibitem{GM2003}
P.F. Guan, X.N. Ma,
The Christoffel-Minkowski problem I, Convexity of solutions of a Hessian
equation, Invent. Math., 151(2003), 553-577.


\bibitem{GZ2019v2}
P.F. Guan, X.W. Zhang, A class of curvature type equations, Pure and Applied Math Quarterly, 17(2021), No. 3, 865-907.

\bibitem{HL82}
R. Harvey, B. Lawson, Calibrated geometries, Acta Math., 148(1982), 47-157.



\bibitem{I87}
N. Ivochkina, Solutions of the Dirichlet problem for certain equations of Monge-Amp\`{e}re type (in Russian), Mat. Sb., 128(1985), 403-415.




\bibitem{Kr}
N. V. Krylov, On the general notion of fully nonlinear second order elliptic equation, Trans. Amer. Math. Soc., 3(1995), 857-895.




\bibitem{L96}
G. Lieberman, Second order parabolic differential equations, World Scientific, 1996.








\bibitem{N1953}
L. Nirenberg,
 The Weyl and Minkowski problems in differential geometry in the large, Comm. Pure Appl. Math., 6(1953), 337-394.





\bibitem{S}
R. Schneider, Convex bodies: The Brunn-Minkowski theory, Cambridge University, 1993.

\bibitem{S05}
J. Spruck, Geometric aspects of the theory of fully nonlinear elliptic equations, Clay Mathematics
Proceedings, 2(2005), 283-309.


\bibitem{T95}
N.S. Trudinger, On the Dirichlet problem for Hessian equations, Acta Math.,
175(1995), 151-164.

\bibitem{TX-22}
Q. Tu, N. Xiang, The Dirichlet problem for mixed Hessian equations on Hermitian manifolds, arXiv: 2201.05030, 2022.







\bibitem{Ur02}
J. Urbas, Hessian equations on compact Riemannian manifolds, Nonlinear Problems in Mathematical
Physics and Related Topics II, New York, 2002, 367-377.






\bibitem{Via2000}
J. Viaclovsky,  Conformal geometry, contact geometry and the calculus of variations, Duke Math. J.,
101(2000), 283-316.


\bibitem{Zhang21}
Q. Zhang, Regularity of the Dirichlet Problem for the Non-degenerate Complex Quotient Equations, Int. Math. Res. Not., 23(2021), 17673-17694.

\bibitem{Zhou21}
J. D. Zhou, A class of the non-degenerate complex quotient equations on compact K\"ahler manifolds, Comm. Pure Appl. Anal., 20(2021), 2361-2377.

\bibitem{Zhou22}
J. D. Zhou, The interior gradient estimate for a class of mixed Hessian curvature equations, J. Korean Math. Soc., 59(2022), 53-69.

\end{thebibliography}
\end{document}